\documentclass[reqno,11pt,letterpaper]{article}
\usepackage{amsmath,amssymb,amsthm,graphicx,mathrsfs,url}
\usepackage[usenames,dvipsnames]{color}
\usepackage[colorlinks=true,linkcolor=Red,citecolor=Green,urlcolor=Blue]{hyperref}
\usepackage[super]{nth}
\usepackage[open, openlevel=2, depth=3, atend]{bookmark}
\hypersetup{pdfstartview=XYZ}

\def\?[#1]{\textbf{[#1]}\marginpar{\Large{\textbf{??}}}}

\setlength{\textheight}{8.70in} \setlength{\oddsidemargin}{0.00in}
\setlength{\evensidemargin}{0.00in} \setlength{\textwidth}{6.2in}
\setlength{\topmargin}{0.00in} \setlength{\headheight}{0.18in}
\setlength{\marginparwidth}{1.0in}
\setlength{\abovedisplayskip}{0.2in}
\setlength{\belowdisplayskip}{0.2in}
\setlength{\parskip}{0.05in}

\DeclareGraphicsRule{*}{mps}{*}{}

\frenchspacing

\numberwithin{equation}{section}

\newtheorem{theorem}{Theorem}[section]
\newtheorem{lem}[theorem]{Lemma}
\newtheorem{prop}[theorem]{Proposition}

\newcounter{conj}
\newtheorem{conjecture}[conj]{Conjecture}

\newcommand{\eps}{\epsilon}

\usepackage{comment}

\DeclareMathSymbol{\leqslant}{\mathalpha}{AMSa}{"36} 
\DeclareMathSymbol{\geqslant}{\mathalpha}{AMSa}{"3E} 
\DeclareMathSymbol{\eset}{\mathalpha}{AMSb}{"3F}     
\renewcommand{\leq}{\;\leqslant\;}                   
\renewcommand{\geq}{\;\geqslant\;}                   



\usepackage{bbm}

\newcommand{\E}{\mathbb{E}}
\newcommand{\cE}{\mathcal{E}}
\renewcommand{\P}{\mathbb{P}}


\def\eps{\varepsilon}

\def\bi{\begin{itemize}}
\def\ei{\end{itemize}}

\def\bnum{\begin{enumerate}}
\def\enum{\end{enumerate}}
\def\<#1{\langle #1 \rangle}

\def\u{$\mathbf{U_{n,g_n}}$}
\def\he{half-edge}

\def\core{\text{core}}
\def\vol{\text{vol}}
\def\cu{$\core(\mathbf{U_{n,g_n}})$}
\def\c*{$\core^*(\mathbf{U_{n,g_n}})$}
\def\cM{$\core^{<M}(\mathbf{U_{n,g_n}})$}
\def\d{\mathbf{d}}
\def\Ud{$\mathcal{U}(\d)$}
\def\dbr{doubly rooted tree}
\def\cmd{$\text{CM}(\d)$}
\def\kexp{$\kappa$-expander}
\def\dexp{$\delta$-expander}

\title{Large expanders in high genus unicellular maps}

\author{Baptiste Louf}



\begin{document}

%
 \maketitle
\begin{abstract}
We study large uniform random maps with one face whose genus grows linearly with the number of edges. They can be seen as a model of discrete hyperbolic geometry. In the past, several of these hyperbolic geometric features have been discovered, such as their local limit or their logarithmic diameter. In this work, we show that with high probability such a map contains a very large induced subgraph that is an expander.
\end{abstract}

\section{Introduction}
\paragraph{Combinatorial maps} Combinatorial maps are discrete geometric structures constructed by gluing polygons along their sides to form (compact, connected, oriented) surfaces. They appear in various contexts, from computer science to mathematical physics, and have been given a lot of attention in the past few decades. The first model that was extensively studied is \emph{planar maps} (or maps of the sphere), starting with their enumeration \cite{Tut62,Tut63} by generating function methods. Later on, explicit constructions put planar maps in bijection with models of decorated trees \cite{Sch98these,Sch98,BDG04,BF12,AP15}, and geometric properties of large random uniform planar maps have been studied \cite{AS03,CS04,LG11,Mie11}. All these works were later extended to maps on fixed surfaces of any genus (see for instance \cite{LW72,BC86} for enumeration, \cite{CMS09,Lep19} for bijections, and 
\cite{Bet16} for random maps). 
\paragraph{High genus maps}
Much more recently, another regime of maps has been studied: high genus maps, that is (sequences of) maps whose genus grows linearly in the size of the map. The main goal is to study the geometric properties of a random uniform such map as the size tends to infinity. By the Euler formula, the high genus implies that these maps have negative average discrete curvature. They must therefore have hyperbolic features, some of whose have been identified in previous works \cite{ACCR13,Ray13a,BL19,BL20, Lou20}.

Mostly, two types of models of high genus maps have been dealt with. First, \emph{unicellular maps}, i.e. maps with one face, who are easier to tackle thanks to an explicit bijection \cite{CFF13}, and then more general models of maps like triangulations or quadrangulations. It is believed that both models have a similar behaviour.

The local behaviour of high genus maps around their root is now well understood (\cite{ACCR13} in the unicellular case, \cite{BL19,BL20} in the general case), and some global properties have been tackled: the planarity radius \cite{Lou20} (see also \cite{Ray13a} for unicellular maps) and the diameter (\cite{Ray13a} for unicellular maps, still open for other models). 
\paragraph{Large expanders: a result and a conjecture}

In this paper we deal with yet another property: the presence of large expanders inside our map, in the case of unicellular maps. Contrary to the previous properties, this involves the whole geometric structure of the map. Expander graphs are very well connected graphs, in which every set of vertices has a large number of edges going out of it, which is a typical hyperbolic behaviour\footnote{indeed, for each set, a large proportion of its mass is contained on its boundary}. Unfortunately, it is impossible that the whole map itself is an expander, since it can be shown that finite but very large pending trees (which have very bad influence on the expansion of the graph) can be found somewhere in the map. However, it can be shown that most of the map is an expander, in the following sense.

Let $\frac{g_n}{n}\to \theta\in (0,1/2)$, and let \u{} be a uniform unicellular map of size $n$ and genus $g_n$.
\begin{theorem}\label{thm}
For all $\eps>0$, there exists a $\kappa>0$ depending only of $\eps$ and $\theta$ such that the following is true.
With high probability\footnote{throughout the paper, we will write \emph{with high probability} or \emph{whp} in lieu of \emph{with probability $1-o(1)$ as $n\to\infty$}.}, there exists an induced subgraph $G_n$ of \u{} that has at least $(1-\eps)n$ edges and is a $\kappa$-expander.
\end{theorem}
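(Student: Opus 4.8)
The plan is to use the CFF-type bijection between unicellular maps of genus $g_n$ and (labelled) $C$-decorated trees — i.e. a plane tree on $n$ edges together with a way of gluing the remaining $n+1-2g_n$ corners into a one-face surface of the required genus — and to transfer the problem to a statement about a random plane tree with a random uniform "pairing"/gluing of a linear number of its vertices. The key structural input is that, by the local-limit results of \cite{ACCR13}, the map \u{} looks locally like the supercritical geometric Galton–Watson tree (the Bolthausen–Sznitman / $\mathrm{PSGW}_\theta$ local limit), so in particular there is a constant $M=M(\eps,\theta)$ such that whp all but at most $\tfrac{\eps}{3}n$ edges of the map lie outside pending trees of height $>M$; removing all maximal pending subtrees of height $>M$ costs at most $\tfrac{\eps}{3}n$ edges and kills the only obstruction mentioned in the introduction. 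Call the resulting induced subgraph $G_n^{<M}$.

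Next I would control the expansion of $G_n^{<M}$ directly. Because $G_n^{<M}$ still has $\Theta(n)$ edges and a linear number of gluings, a first-moment argument should show that whp there is no subset $S$ of vertices with $|S|\leq \tfrac12|V(G_n^{<M})|$ and $|\partial S|<\kappa|S|$: one enumerates candidate "bad sets" $S$ and bounds the probability that the uniform gluing fails to send enough corners of $S$ to corners outside $S$. Here one uses that, conditionally on the underlying plane tree, the gluing is a uniform matching/structure on $2g_n+\Theta(1)$ corners, so each corner of $S$ has probability $\gtrsim 1-|S|/n$ of being matched outside $S$; a Chernoff/Bernstein estimate gives that $|\partial S|$ concentrates above $c|S|$ for $|S|$ not too close to $n/2$, and the number of connected (in the tree, hence few) candidate sets of a given size is at most exponential with a controllable rate, so choosing $\kappa$ small enough makes the union bound summable. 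The pending-tree truncation is exactly what makes the "sets near a leaf" contribution harmless, since after truncation no long thin appendage survives. One then deletes the finitely many remaining bad sets (there are none whp, or a negligible number) to reach the final $G_n$ with $\geq(1-\eps)n$ edges.

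The main obstacle I anticipate is the first-moment / union-bound over candidate bad sets: a naive count of subsets $S$ of size $k$ is $\binom{|V|}{k}$, which is far too large to beat the $e^{-c k}$ tail probability when $k$ is a small constant fraction of $n$ — so one genuinely needs to exploit the tree structure to argue that a set with small edge boundary must be "almost connected" in the map, hence described by few parameters (a bounded number of tree-subtrees plus a bounded number of gluing edges), reducing the count to something like $e^{o(k)}$ or $C^k$ with $C$ close to $1$. Making this rigorous — i.e. showing that small-boundary sets in a unicellular map of linear genus are necessarily close to unions of few pending subtrees — and matching the entropy of that description against the gluing-tail exponent is where the real work lies. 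A secondary technical point is transferring estimates between the unconditioned random tree + uniform gluing and the genuine uniform unicellular map of exact genus $g_n$; this should follow from the CFF bijection together with the known asymptotics for the number of unicellular maps (the relevant large-deviation/enumeration estimates are available), at the cost of a polynomial factor that is absorbed by the exponential gains above.
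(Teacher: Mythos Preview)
Your approach is genuinely different from the paper's, and it has a concrete gap that I do not think can be patched without essentially changing strategy.

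The truncation you propose --- deleting maximal pending subtrees of height $>M$ --- does \emph{not} remove the only obstruction to expansion. A unicellular map of genus $g_n$ decomposes into its \emph{core} (the map obtained by iteratively pruning leaves and then smoothing degree-$2$ vertices) together with a collection of doubly-rooted trees, one grafted onto each edge of the core. A branch whose underlying doubly-rooted tree is a bare path of length $\ell$ produces, in the map, a chain of $\ell-1$ vertices of degree $2$ joining two core vertices. Such a chain has no pending subtree at all, so your truncation leaves it untouched. Taking $S$ to be the middle half of that chain gives $\partial S=2$ and $\mathrm{vol}(S)\asymp \ell$, hence $h(S)\asymp 1/\ell$. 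Since the branch sizes have only exponential tails, whp there exist branches of length of order $\log n$, so your $G_n^{<M}$ is \emph{not} a $\kappa$-expander for any fixed $\kappa>0$. This is not a union-bound difficulty; the bad sets genuinely exist, so no refinement of the counting of candidate sets can show they do not.

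The paper sidesteps exactly this problem by working first with the core itself. Conditionally on its degree sequence $\mathbf d$, the core is uniform among one-face maps with degrees $\mathbf d$, hence distributed as a map configuration model conditioned to be unicellular. Because every degree in $\mathbf d$ is at least $3$, the number of vertices is at most $\tfrac{2}{3}$ of the number of half-edges, and this is precisely what makes the union bound over subsets go through (the paper proves the configuration model is a $\delta$-expander except on an event of probability $o(1/n)$, which beats the $\Theta(1/n)$ cost of conditioning on unicellularity). The core is then only a topological minor of $\mathbf U_{n,g_n}$, and contains only a $\theta$-dependent fraction of the edges; the paper recovers almost all edges by putting back the branches of size $<M$ (an elementary lemma shows this divides the Cheeger constant by at most $2M+1$), and finally invokes a separate theorem (\cite{LS20}) that upgrades ``large expanding topological minor'' to ``large expanding induced subgraph''. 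Your proposal is missing all three of these ingredients: the reduction to a minimum-degree-$3$ model where the union bound is tractable, the controlled reinsertion of bounded branches, and the topological-minor-to-induced-subgraph step.
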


It is natural to conjecture that a similar results holds for more general models of maps (i.e., without a fixed number of faces). For instance, let $\mathbf{T_{n,g_n}}$ be a uniform triangulation of genus $g_n$ with $3n$ edges. The following conjecture (and the present work) comes from a question of Itai Benjamini (private communication) about large expanders in high genus triangulations.

\begin{conjecture}\label{conj}
For all $\eps>0$, there exists a $\kappa>0$ depending only of $\eps$ and $\theta$ such that the following is true.
With high probability, there exists an induced subgraph $G_n$ of $\mathbf{T_{n,g_n}}$ that has at least $(1-\eps)3n$ edges and is a $\kappa$-expander.
\end{conjecture}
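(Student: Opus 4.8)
The natural plan is to deduce the triangulation case from the unicellular case of Theorem~\ref{thm} by locating a genus-$g_n$ unicellular map inside $\mathbf{T_{n,g_n}}$. Recall the standard fact that if $\tau^{*}$ is any spanning tree of the dual of $\mathbf{T_{n,g_n}}$, then deleting from $\mathbf{T_{n,g_n}}$ the $F-1=2n-1$ primal edges dual to the edges of $\tau^{*}$ merges all faces into one; so the spanning subgraph $\mathbf{U}':=\mathbf{T_{n,g_n}}\setminus\tau^{*}$ is a unicellular map of the same genus $g_n$, with the same vertex set (of size $V=n+2-2g_n$) as $\mathbf{T_{n,g_n}}$ and with $E'=3n-(2n-1)=n+1$ edges. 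Since $g_n/(n+1)\to\theta\in(0,1/2)$, Theorem~\ref{thm} applies to objects of this type, and crucially $\mathbf{U}'$ sits inside $\mathbf{T_{n,g_n}}$ on the same vertex set, so any subgraph of $\mathbf{U}'$ is automatically a subgraph of $\mathbf{T_{n,g_n}}$.

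Granting, optimistically, that $\mathbf{U}'$ can be coupled with (or shown contiguous to) a uniform unicellular map of genus $g_n$ with $n+1$ edges, Theorem~\ref{thm} provides whp a vertex set $W$ such that $\mathbf{U}'[W]$ has at least $(1-\eps_0)(n+1)$ edges and is a $\kappa_0$-expander. Adjoining edges to a graph on a fixed vertex set can only enlarge the edge-boundary of every set, so $G_n:=\mathbf{T_{n,g_n}}[W]$ contains $\mathbf{U}'[W]$ and is again a $\kappa_0$-expander. For the edge count one writes $|E(\mathbf{T_{n,g_n}})|-|E(G_n)|\leq\sum_{v\notin W}\deg_{\mathbf{T_{n,g_n}}}(v)$ and needs $V\setminus W$ to be both small and free of large-degree vertices; this should follow from a degree-aware strengthening of Theorem~\ref{thm}, since the vertices its proof discards lie in pending trees and short degree-$2$ filaments and hence have bounded degree, while the maximum degree of $\mathbf{T_{n,g_n}}$ is whp $O(\log n)$.

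The genuine difficulty — which is why this is a conjecture and not a corollary — lies in comparing the law of $\mathbf{U}'$ with the uniform measure on genus-$g_n$ unicellular maps with $n+1$ edges. Whatever rule one uses to pick $\tau^{*}$, the marginal of $\mathbf{U}'$ is reweighted, relative to that uniform measure, by a factor encoding how $\mathbf{U}'$ can be completed to a triangulation by triangulating its single face — equivalently a matrix-tree-type count — and this factor depends delicately on the combinatorics of that face, with no evident uniform-in-$n$ bound. Establishing such an absolute-continuity/contiguity estimate, or exhibiting an explicitly analysable non-uniform choice of $\tau^{*}$ for which $\mathbf{U}'$ still inherits the conclusion of Theorem~\ref{thm}, is the crux; everything else is bookkeeping on top of Theorem~\ref{thm}.

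An alternative, more self-contained route is to rerun the proof of Theorem~\ref{thm} directly on triangulations: use a bijection between high-genus triangulations and decorated trees (of blossoming or Bouttier--Di Francesco--Guitter / Chapuy--Marcus--Schaeffer type), peel off pending trees and short degree-$2$ filaments to expose a kernel of linear size with minimum degree $\geq 3$ and light tails, show that the kernel is contiguous to a configuration model with the appropriate degree structure, and conclude by expansion of that configuration model exactly as in the unicellular case. Here the obstacle is combinatorial rather than probabilistic: no bijection for high-genus triangulations is as transparent as the Chapuy--F\'eray--Fusy bijection, so identifying the law of the kernel precisely — in particular controlling the correlations introduced by the face-degree-$3$ constraint — is the delicate point. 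In both routes the final structural input (a configuration model of minimum degree $\geq 3$ is whp an expander) is the same one that underlies Theorem~\ref{thm}.
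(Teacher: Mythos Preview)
The statement you are addressing is labelled \emph{Conjecture} in the paper, not a theorem: the paper offers no proof, and in fact explicitly calls it ``a very ambitious open problem about the geometry of high genus maps.'' So there is no ``paper's own proof'' to compare your attempt against. You clearly recognise this yourself, since your proposal is not a proof but a discussion of two possible strategies together with the obstacles each one faces; in that sense your write-up is an honest status report rather than an attempted solution, and the gaps you flag (the non-uniformity of the law of $\mathbf{U}'$ obtained by removing a dual spanning tree, and the absence of a tractable bijection for high-genus triangulations) are exactly the reasons the paper leaves the statement as a conjecture.

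One technical point in your heuristic is actually wrong and worth correcting even at this informal level. You claim that passing from $\mathbf{U}'[W]$ to $\mathbf{T_{n,g_n}}[W]$ by adjoining the extra triangulation edges preserves $\kappa_0$-expansion because ``adjoining edges can only enlarge the edge-boundary of every set.'' With the Cheeger constant used in the paper, namely $h_G(X)=\partial_G(X)/\min(\vol(X),\vol(\overline X))$, this is false: adding edges with both endpoints in $X$ increases $\vol(X)$ without changing $\partial_G(X)$, so the ratio can drop. In a triangulation the number of added edges is of order $2n$ while $\mathbf{U}'[W]$ has only about $n$ edges, so this effect is not negligible; one would need a quantitative argument (e.g.\ bounding how many added edges can concentrate inside any given $X$) rather than the monotonicity you invoke. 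This does not change the overall picture---the real obstruction remains the distributional comparison you already identified---but it is a step that would fail as written.
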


This conjecture deals with the entire structure of the map, therefore we believe it is a very ambitious open problem about the geometry of high genus maps. Some other conjectures might be easier to tackle, see \cite{Lou20}.

\paragraph{Structure of the paper}

The proof of the main result involves the refinement/extension of several known results. We chose to push all the technical proofs to the appendix, in order to make it clear how we combine these results to obtain the proof of our main theorem. 

The main objects are defined in the next section, then we give an outline of the proof. The proof consists roughly of two halves: showing that the core is an expander (Section~\ref{sec_core_exp}) and showing that a well defined ``almost core" is still an expander while having a large proportion of the edges (Section~\ref{sec_almost}). Finally, in the appendix, we give the proofs of the technical lemmas, as well as a causal graph of all the parameters involved (see Section~\ref{sec_causal} for more details).

\paragraph{Acknowledgements} The author is grateful to Thomas Budzinski, Guillaume Chapuy, Svante Janson and Fiona Skerman for useful comments and discussions about this work, and to Guillaume Conchon--Kerjan for pointing out an error in a previous version of this paper. Work supported by the Knut and Alice Wallenberg foundation.

\section{Definitions}
We begin with some definitions about graphs. Note that here we allow graphs to have loops and multiple edges, such objects are also commonly called multigraphs. We will write $e(G)$ for the number of edges in a graph $G$. An \emph{induced subgraph} of a graph $G$ is a graph obtained from $G$ by deleting some of its vertices and all the edges incident to these vertices. Given a subset $X$ of vertices of $G$, we write $G[X]$ for the induced subgraph of $G$ obtained by deleting all vertices that do not belong to $X$.  A \emph{topological minor} of a graph $G$ is a graph obtained from $G$ by deleting some of its vertices, some of its edges, and by ``smoothing" some of its vertices of degree $2$ as depicted in Figure~\ref{fig_smoothing}.

\begin{figure}
\center
\includegraphics[scale=0.5]{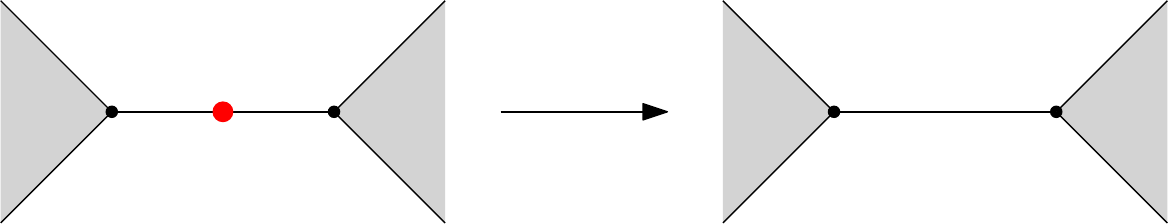}
\caption{Smoothing a vertex of degree $2$.}\label{fig_smoothing}
\end{figure}

Given a graph $G$ and a subset $X$ of its vertices, we define $\text{vol}(X)=\sum_{v\in X}\deg(v)$ and $\partial_G(X)$ as the number of edges of $G$ with exactly one endpoint in $X$. Then we set \[h_G(X)=\frac{\partial_G(X)}{\min(\text{vol}(X),\text{vol}(\overline X))},\] where $\overline X$ is the set of vertices of $G$ that do not belong to $X$. A graph $G$ is said to be a \emph{$\kappa$-expander} if the following inequality holds for every\footnote{it is easily verified that one only needs to check this inequality for subsets $X$ such that $G[X]$ is connected.} subset $X$ of vertices of $G$ such that $X\neq \emptyset$ and $\overline X\neq \emptyset$:
\[h_G(X)\geq \kappa.\]

A \emph{map} is the data of a collection of polygons whose sides were glued two by two to form a compact oriented surface. The interior of the polygons define the \emph{faces} of the map. After the gluing, the sides of the polygons become the \emph{edges} of the map, and the vertices of the polygons become the \emph{vertices} of the map. Alternatively, a map is the data of a graph endowed with a \emph{rotation system}, i.e. a clockwise ordering of \he{s} around each vertex. A \emph{unicellular map} of size $n$ is the data of a $2n$-gon whose sides were glued two by two to form a compact, connected, orientable surface. The genus $g$ of the surface is also called the \emph{genus} of the map. We will consider \emph{rooted maps}, i.e. maps with a distinguished oriented edge called the \emph{root}.
Let $\mathcal{U}_{n,g}$ be the set of rooted unicellular maps of size $n$ and genus $g$. A map of $\mathcal{U}_{n,g}$ has exactly $n+1-2g$ vertices by Euler's formula. We will denote by $\mathbf{U_{n,g}}$ a random uniform element of $\mathcal{U}_{n,g}$.

A \emph{tree} is a unicellular map of genus $0$. A \emph{doubly rooted tree} is a tree with a ordered pair of distinct marked vertices (but without a distinguished oriented edge). The \emph{size} of a doubly rooted tree is its number of edges. We set $dt_n$ to be the number of \dbr{s} of size $n$.
The \emph{core} of a unicellular map $m$, noted $\core(m)$, is the map obtained from $m$ by iteratively deleting all its leaves, then smoothing all its vertices of degree $2$ (see Figure~\ref{fig_core_decomp}). We do not make precise here how the root of $\core(m)$ is obtained from the root of $m$, we will only explain it in Section~\ref{sec_branches} (the only place where the root matters, for enumeration purposes, everywhere else in the paper we will only need the graph structure of the core).

\begin{figure}
\center
\includegraphics[scale=0.7]{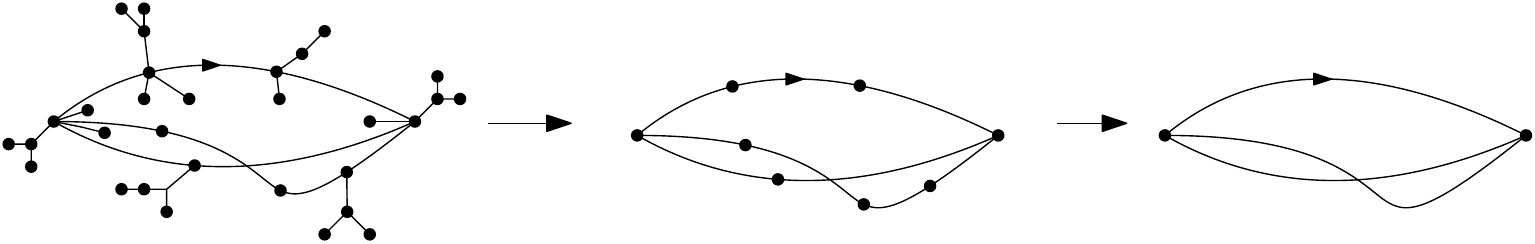}
\caption{The core decomposition of a unicellular map.}\label{fig_core_decomp}
\end{figure}

On the other hand, $m$ can be obtained from $\core(m)$ in a unique way by replacing each edge of $\core(m)$ by a doubly rooted tree. More precisely, given a \dbr{} $t$ and its two distinguished vertices $v_1$ and $v_2$, there is a unique simple path $p$ going from $v_1$ to $v_2$. Let $e_1$ (resp. $e_2$) be the edge of $p$ that is incident to $v_1$ (resp. $v_2$), and let $c_1$ (resp. $c_2$) that comes right before $e_1$ (resp. $e_2$) in the counterclockwise order around $v_1$ (resp. $v_2$). Now, we can remove an edge $e$ from $\core(m)$ to obtain a map with a pair of marked corners $c$ and $c'$, and we can glue $c_1$ on $c$ and $c_2$ on $c'$  (see Figure~\ref{fig_branches}). The set of the doubly rooted trees used to construct $m$ from $\core(m)$ will be called the branches of $m$. We also define $\core^{<M}(m)$ to be the map obtained from $m$ by replacing all its branches of size greater or equal to $M$ by a single edge. Notice that both $\core(m)$ and $\core^{<M}(m)$ are topological minors of $m$.

\begin{figure}
\center
\includegraphics[scale=0.7]{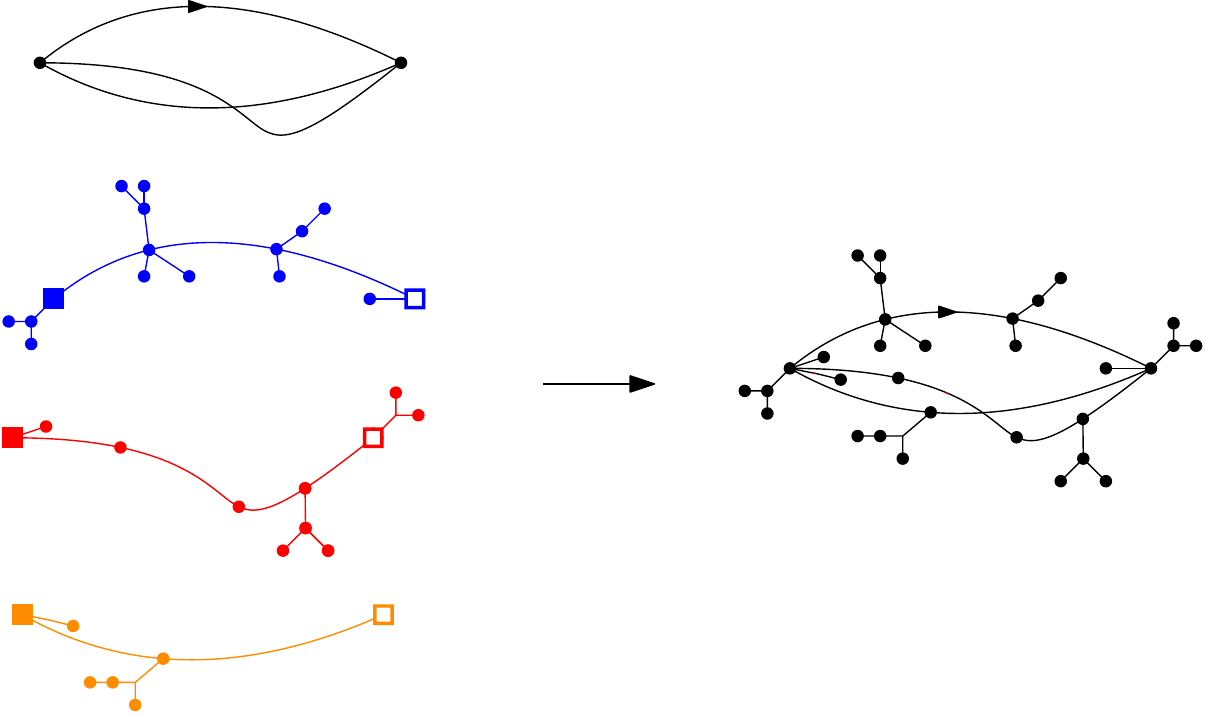}
\caption{Reconstructing a map from its core and branches.}\label{fig_branches}
\end{figure}

For any multiset of integers $\d=\{d_1,d_2,\ldots,d_k\}$, we set $|\d|=\sum_{i=1}^k d_i$. Let $\mathcal{U}(\d)$ be the set of rooted unicellular maps with vertex degrees given by $\d$\footnote{notice that if $|\d|$ is odd, or if $|\d|/2+k$ is even, then \Ud{} is empty.}. If $m$ is a unicellular map, let $\d(m)$ be the multiset of its vertex degrees. Now, we will define a random map $\text{CM}(\d)$ in the following way: take an arbitrary ordering $(d_1,d_2,\ldots,d_k)$ of $\d$, and let $v_1,v_2,\ldots,v_k$ be vertices such that $v_i$ has $d_i$ distinguishable dangling \he{s} arranged in  clockwise order around it. Now $\text{CM}(\d)$ is the random map obtained by taking a random uniform pairing of all the dangling \he{s}, and then picking a uniform oriented edge as the root (see Figure~\ref{fig_config}).

\begin{figure}
\center
\includegraphics[scale=0.5]{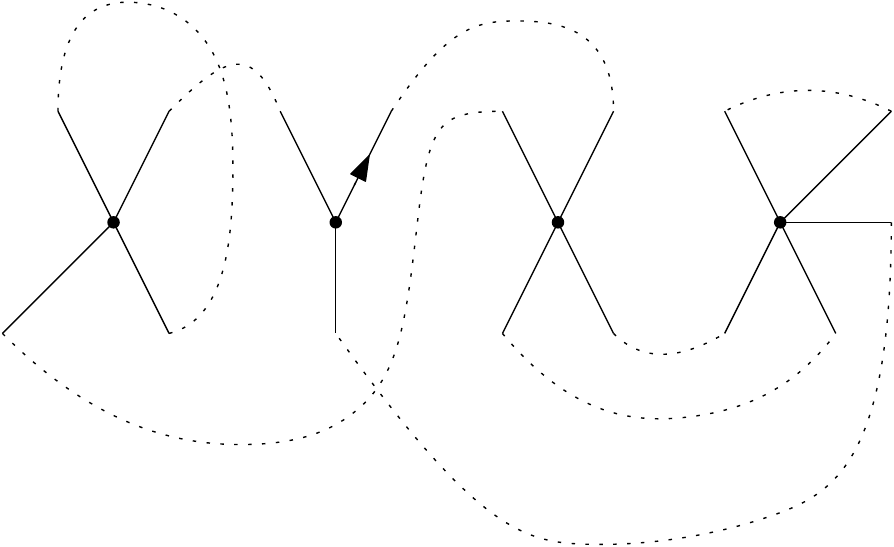}
\caption{The map configuration model for $\d=(3,4,4,5)$.}\label{fig_config}
\end{figure}

\section{Strategy of proof}
In this section, we give the outline of the proof of Theorem~\ref{thm}. First, the problem reduces to finding a good topological minor, because of the following theorem of Skerman and the author:
\begin{theorem}[\cite{LS20}, Theorem 1] \label{thm_topminor}
For all $\kappa, \alpha>0$, and for all $0<\alpha'<\alpha$, there exists a $\kappa'>0$ such that the following holds for every (multi)graph $G$.

If there exists a graph $H$ satisfying the following conditions:
\begin{itemize}
\item $e(H)\geq \alpha e(G)$,
\item $H$ is a topological minor of $G$,
\item $H$ is a $\kappa$-expander,
\end{itemize}
then there exists a graph $H^*$ satisfying the following conditions:
\begin{itemize}
\item $e(H^*)\geq \alpha' e(G)$,
\item $H^*$ is an induced subgraph of $G$,
\item $H^*$ is a $\kappa'$-expander.
\end{itemize}
\end{theorem}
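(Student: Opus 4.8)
\emph{Preliminary reduction.} Since $e(H)\ge\alpha e(G)$ we have $\alpha' e(G)\le\frac{\alpha'}{\alpha}e(H)$, so it suffices to build an induced subgraph $H^*\subseteq G$ that is a $\kappa'$-expander with $e(H^*)\ge\frac{\alpha'}{\alpha}e(H)$; as $\alpha'/\alpha<1$, we are allowed to discard a fixed positive fraction of the edges of $H$ on the way.

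\emph{A minimal subdivision and two budgets.} Realize $H$ as a topological minor of $G$: fix an injection $\phi\colon V(H)\to V(G)$ and internally disjoint paths $P_e$ ($e\in E(H)$) joining the $\phi$-images of the ends of $e$, chosen so that $\sum_e\ell(P_e)$ is minimal, where $\ell$ denotes length. Put $W=\phi(V(H))\cup\bigcup_e\mathrm{int}(P_e)$ and $G_0=\bigcup_e P_e$, a subdivision of $H$ inside $G$. Minimality makes each $P_e$ chordless (replacing a chorded subpath by the chord would shorten it), but $G[W]$ still carries ``extra'' edges not in $G_0$ (parallel to subdivision edges, or joining distinct paths, etc.). Everything is governed by two inequalities coming from $e(G)\le e(H)/\alpha$: the total excess $\sum_e(\ell(P_e)-1)=e(G_0)-e(H)\le(\tfrac1\alpha-1)e(H)$, and the number of extra edges in $G[W]$ is $\le e(G)-e(G_0)\le(\tfrac1\alpha-1)e(H)$.

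\emph{Pruning (the heart of the proof).} Fix a small target $\kappa'>0$, then large constants $T,C$ (all depending only on $\alpha,\alpha',\kappa$). First delete from $H$ every $e$ with $\ell(P_e)>T$: there are $\le e(H)/(\alpha T)$ such edges, so we lose few \emph{edges of $H$} (their paths being long is irrelevant, since we simply stop realizing them). Next, in $G[W]$, delete one end of every vertex-pair joined by more than $C$ extra edges and every internal path-vertex carrying more than $C$ extra edges, and delete from $H$ every edge whose path is thereby broken; the point that makes this cheap is that a pair joined by $p$ parallel extra edges can spoil expansion only when $p\gtrsim \ell(x)+\ell(y)$, where $\ell(\,\cdot\,)$ is the ``legitimate'' ($G_0$-)degree, so the smaller end has legitimate degree $\lesssim p$ and deleting it costs at most a $\kappa'$-proportion of the $p$ extra edges this pair consumes from the second budget — summing over pairs, at most $O(\kappa'/\alpha)\,e(H)$ edges of $H$ are lost. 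Finally, since removing a $\delta$-fraction of the edges (and a tiny set of vertices) from the $\kappa$-expander $H$ cannot open a sparse cut on any \emph{large} vertex-set, an expander-pruning lemma (appendix) lets us delete one further set of vertices of $\vol$ $O(\delta e(H)/\kappa)$, after which what survives, $H^\circ\subseteq H$, is a $\kappa^\circ$-expander with $\kappa^\circ=\Omega(\kappa)$, all of whose edges $e$ have $\ell(P_e)\le T$, and with $e(H^\circ)\ge(1-g)e(H)$ for $g=g(\alpha,\kappa,\kappa',T,C)$; choosing $\kappa'$ small and $T,C$ large makes $g<1-\alpha'/\alpha$.

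\emph{Conclusion and main obstacle.} Set $H^*:=G[W^\circ]$ with $W^\circ=\phi(V(H^\circ))\cup\bigcup_{e\in E(H^\circ)}\mathrm{int}(P_e)$. Then $e(H^*)\ge\sum_{e\in E(H^\circ)}\ell(P_e)\ge e(H^\circ)\ge\frac{\alpha'}{\alpha}e(H)\ge\alpha' e(G)$. Moreover $H^*$ is a ($\le T$)-fold subdivision of the $\kappa^\circ$-expander $H^\circ$ decorated with extra edges of multiplicity $\le C$, in which (by the pruning) no vertex-set of $\vol$ $\le V_0$ has boundary below $\kappa'$ times its volume; a lemma (appendix) then yields that such a graph is a $\kappa''$-expander for a suitable $\kappa''=\kappa''(\kappa^\circ,T,C)>0$: small sets are fine by construction, and for larger $X$ one ``rounds'' $X$ to $A=\{v:\phi(v)\in X\}$, bounding $\partial_{H^*}(X)$ below by the number of $H^\circ$-edges across $(A,\bar A)$ and $\vol(X)$ in $H^*$ above by $O(T^2C)\,\vol_{H^\circ}(A)$, the contributions of partially used subdivided paths and stray extra edges being absorbed into the bounded factors $T,C$; taking the final expansion constant to be $\min(\kappa',\kappa'')$ then makes $H^*$ the desired $\kappa'$-expander. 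The genuinely delicate step is the pruning: one must marry an expander-pruning statement with the pair-multiplicity bound, keep the total discarded fraction of $e(H)$ strictly below $1-\alpha'/\alpha$, and at the same time not let $\kappa^\circ$ collapse — this is exactly why the constants must be chosen in the order $\kappa'$, then $T,C$, then read off $\kappa^\circ$. The subdivision-with-bounded-corruption lemma is comparatively routine, but its rounding argument still requires care for partially used paths.
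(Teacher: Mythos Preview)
The paper does not prove this theorem. Theorem~\ref{thm_topminor} is quoted verbatim from the companion paper \cite{LS20} (Theorem~1 there) and used as a black box to reduce Theorem~\ref{thm} to Proposition~\ref{prop_core_M}. The appendix of the present paper proves Lemma~\ref{lem_config_unicellular}, Proposition~\ref{prop_config_expander}, and Lemma~\ref{lem_few_big_branches}, but contains nothing about Theorem~\ref{thm_topminor}. There is therefore no ``paper's own proof'' to compare your attempt against.

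On your sketch itself: the overall strategy (realize $H$ as a minimal subdivision in $G$, use the edge budget $e(G)\le e(H)/\alpha$ to bound both total subdivision length and the number of extra induced edges, prune long paths and high-multiplicity extras, then invoke a subdivision-with-bounded-corruption comparison) is a reasonable line of attack. However, as written it is not a proof. You defer the two load-bearing steps to lemmas ``in the appendix'' that do not exist here; the claim that removing one end of each high-multiplicity pair costs only $O(\kappa'/\alpha)\,e(H)$ edges of $H$ is asserted with a heuristic (``can spoil expansion only when $p\gtrsim\ell(x)+\ell(y)$'') rather than an argument; and the expander-pruning step, which must simultaneously keep $\kappa^\circ=\Omega(\kappa)$ and the discarded fraction below $1-\alpha'/\alpha$, is only named, not carried out. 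If you want this to stand on its own you need to state and prove those two lemmas explicitly and make the constant-tracking precise.
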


Furthermore, we will prove the following:
\begin{prop}\label{prop_core_M}
For all $\eps>0$, there exist $M$ and $\kappa>0$ that depend only on $\theta$ and $\eps$ such that the following is true whp:
\begin{itemize}
\item \cM{} has more than $(1-\eps)n$ edges,
\item \cM{} is a $\kappa$-expander.
\end{itemize}
\end{prop}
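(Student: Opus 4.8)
The plan is to prove the two bullet points by two essentially independent arguments, which occupy the sections \ref{sec_core_exp} and \ref{sec_almost} announced above, with all quantitative inputs deferred to the appendix. \emph{First, the core is an expander.} In Section~\ref{sec_core_exp} I would show that \cu{} is a $\kappa_0$-expander whp for a constant $\kappa_0=\kappa_0(\theta)>0$. Conditionally on its degree sequence $\d$ (which has minimum degree at least $3$ by construction, and which one checks is otherwise well behaved whp --- a technical lemma), the core is, as a graph, a uniform element of \Ud{}, hence distributed like \cmd{} conditioned to be unicellular. I would then combine: (i) a first moment / union bound over cuts showing that \cmd{} is a $\kappa_0$-expander with probability at least $1-e^{-cn}$ --- the minimum degree $3$ being exactly what powers this estimate; and (ii) a technical lemma that \cmd{} is unicellular with probability at least $n^{-O(1)}$. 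Since then the conditional failure probability is at most $e^{-cn}\,n^{O(1)}\to 0$, the expander property survives the conditioning and \cu{} is whp a $\kappa_0$-expander.

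\emph{Second, passing from the core to the almost-core costs only a bounded factor in the expansion constant.} The key structural point is that $\core(\core^{<M}(m))=\core(m)$ while every branch of $\core^{<M}(m)$ has size at most $M$: indeed $\core^{<M}(m)$ is obtained from $\core(m)$ by inflating each edge into the corresponding branch of $m$ if that branch is small, and into a single edge otherwise. Hence it suffices to prove the following deterministic statement: if $H$ is obtained from a $\kappa_0$-expander by replacing each edge with a doubly rooted tree of size at most $M$, then $H$ is a $\kappa_1$-expander with $\kappa_1=\kappa_1(\kappa_0,M)>0$. I would prove this by comparing a connected vertex set $X$ of $H$ with its trace $Y=X\cap V(\core(H))$ on the core: each core edge whose endpoints are separated by $Y$ contributes at least one edge to $\partial_H(X)$, so $\partial_H(X)\geq\partial_{\core(H)}(Y)$; conversely, since every inflating tree has at most $M$ edges, the volume carried by $X$ exceeds $\vol_{\core(H)}(Y)$ by at most a factor $O(M^2)$, the extra contributions coming only from vertices strictly inside trees, whose excess volume is charged either to $\vol_{\core(H)}(Y)$ (using connectivity of $H[X]$) or directly to $\partial_H(X)$; and a set lying inside a single inflating tree satisfies $h_H\geq M^{-2}$ outright. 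Applying this with $H=\core^{<M}(\mathbf{U_{n,g_n}})$ and the $\kappa_0$ from the previous step gives that \cM{} is whp a $\kappa_1$-expander.

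\emph{Third, the edge count.} The branches of $m$ partition its edges, so $e(\core^{<M}(m))=n-\sum_{b:\,|b|\geq M}(|b|-1)\geq n-\sum_{b:\,|b|\geq M}|b|$. The number of branches of \u{} equals $e(\cu{})$, which by Euler's formula together with the minimum degree $3$ of the core lies deterministically between $2g_n$ and $6g_n$; in particular the average branch size is bounded, and a concentration estimate for the branch sizes of \u{} (a technical lemma: conditionally on $e(\cu{})$ the branch sizes are i.i.d.\ from an exponentially decaying law conditioned to sum to $n$, because the number of maps with a prescribed core and prescribed branch sizes $(s_b)_b$ is $\prod_b dt_{s_b}$) shows that for $M=M(\eps,\theta)$ large enough one has $\sum_{b:\,|b|\geq M}|b|\leq\eps n$ whp. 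Hence \cM{} has at least $(1-\eps)n$ edges whp. Taking $\kappa:=\kappa_1$ finishes the proof; the chain of dependencies $M=M(\eps,\theta)$, $\kappa_0=\kappa_0(\theta)$, $\kappa_1=\kappa_1(\kappa_0,M)$ keeps $\kappa$ a function of $\eps$ and $\theta$ only.

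\emph{Main obstacle.} I expect the expansion of the core to be the crux. One needs the $\kappa_0$-expansion of \cmd{} with \emph{exponentially} small failure probability, in order to absorb the merely polynomial probability of the conditioning event ``unicellular'', and this has to be carried out with some care since the core may carry vertices of unbounded (though only logarithmic) degree, so the union bound over cuts must be organised according to the volume of the cut and, for the small cuts, restricted to connected ones. The remaining steps are comparatively soft: the second is an elementary but slightly tedious piece of bookkeeping, and the third a routine concentration estimate flowing directly from the branch-replacement description of the core decomposition.
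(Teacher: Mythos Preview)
Your proposal follows the paper's approach closely: expansion of the core via comparison with the configuration model conditioned to be unicellular, then a deterministic lemma saying that inflating edges into bounded-size doubly rooted trees only divides the Cheeger constant by a bounded factor, then a concentration estimate on the total mass of large branches. The paper does exactly these three steps (Proposition~\ref{prop_core}, Lemma~\ref{lem_adding_branches}, Lemma~\ref{lem_few_big_branches}).

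There is, however, one real quantitative slip in your first step. You assert that \cmd{} is a $\kappa_0$-expander with probability at least $1-e^{-cn}$, and you lean on this exponential bound to beat the merely polynomial probability of the conditioning event ``unicellular''. That exponential bound is false as stated: for sets of very small volume (say $V=6$, two vertices of degree $3$), the probability that all their half-edges are matched internally is only polynomially small in $n$, and a union bound over such sets does not give $e^{-cn}$. What the paper actually proves (Proposition~\ref{prop_config_expander}) is
\[
\P(\text{\cmd{} is not a $\delta$-expander})=\P(\text{\cmd{} is disconnected})+o(1/n),
\]
the point being that, for $\delta<1/6$, a set of volume $4$ or $6$ with fewer than $\delta V$ boundary edges has in fact zero boundary edges, hence witnesses disconnection. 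Since a unicellular map is connected, one then bounds
\[
\P(\text{core not expander})\le \frac{\P(\text{\cmd{} connected and not a $\delta$-expander})}{\P(\text{\cmd{} unicellular})}\le \frac{o(1/n)}{\Theta(1/n)}=o(1).
\]
So the mechanism by which the conditioning is absorbed is not ``exponential beats polynomial'' but rather ``$o(1/n)$ beats $\Theta(1/n)$, after the disconnection events have been excised because they are incompatible with unicellularity''. Your parenthetical ``for the small cuts, restricted to connected ones'' gestures at this, but the write-up of (i) would need to be reorganised around this point.

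Two minor discrepancies, neither fatal: the paper gets a \emph{universal} $\delta$ (independent of $\theta$) rather than your $\kappa_0(\theta)$; and the inflation lemma loses only a factor $2M-1$ in the Cheeger constant (Lemma~\ref{lem_adding_branches}), not the $O(M^2)$ you budget for.
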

It is clear that Theorem~\ref{thm} is an immediate corollary of Theorem~\ref{thm_topminor} and Proposition~\ref{prop_core_M}, since \cM{} is a topological minor of \u{}.

A key tool of the proof of Proposition~\ref{prop_core_M} is the following result.

\begin{prop}\label{prop_core}
There exists a \textbf{universal}\footnote{i.e. independent of $\theta$.} $\delta>0$ such that, whp, \cu{} is a $\delta$-expander.
\end{prop}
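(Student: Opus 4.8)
The plan is to transfer the expansion problem from the random unicellular map $\mathbf{U_{n,g_n}}$ to the configuration model $\mathrm{CM}(\mathbf{d})$ for a suitable random degree sequence $\mathbf{d}$, and then to prove that the configuration model is a $\delta$-expander by a first-moment (union bound) argument over all potentially non-expanding vertex sets. The reason this should work is that the core $\core(\mathbf{U_{n,g_n}})$ has a number of vertices comparable to $g_n \asymp n$ and its total number of half-edges (i.e. $|\mathbf{d}|$) is also $\Theta(n)$, but crucially every vertex of the core has degree at least $3$ (leaves and degree-$2$ vertices have been removed by the core operation). Having minimum degree $3$ is exactly the kind of condition under which random (multi)graph models are expanders with high probability, and this is where the universality of $\delta$ — its independence of $\theta$ — will come from: the bound will only use the minimum degree $3$ and the ratio $e(\core)/v(\core)$, both of which are controlled uniformly in $\theta$.

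The key steps, in order, would be as follows.

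\begin{itemize}
\item \emph{Step 1: Reduce to the configuration model.} Condition on the degree sequence $\mathbf{d}$ of the core. Show (this is surely one of the technical lemmas deferred to the appendix, refining \cite{CFF13,ACCR13,Ray13a}) that, conditionally on $\mathbf{d}(\core(\mathbf{U_{n,g_n}})) = \mathbf{d}$, the core is distributed either exactly as $\mathrm{CM}(\mathbf{d})$ conditioned to be a connected unicellular map of the right genus, or close enough to it that expansion transfers. Since conditioning on connectedness and on being unicellular only multiplies probabilities by a polynomial (or even just a bounded) factor relative to $\mathrm{CM}(\mathbf{d})$ — the probability that $\mathrm{CM}(\mathbf{d})$ is connected with one face is bounded below by a negative power of $n$ — any event holding whp for $\mathrm{CM}(\mathbf{d})$ also holds whp for the core, provided the failure probability we obtain in Step 3 is smaller than any polynomial, i.e. superpolynomially small or exponentially small in $n$.

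\item \emph{Step 2: Control the degree sequence.} Establish that whp the core has $v(\core) \geq c\, n$ vertices for some $c = c(\theta) > 0$ and $e(\core) \leq C n$ edges, with minimum degree $\geq 3$, and moreover that the degree sequence is not too spread out — e.g. $\sum_i d_i^2 = O(n)$, or at least that the number of half-edges incident to high-degree vertices is a negligible fraction. This uses the known description of the core's law (the bijection of \cite{CFF13} and the analysis of vertex degrees in high genus unicellular maps).

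\item \emph{Step 3: First-moment bound on bad sets in $\mathrm{CM}(\mathbf{d})$.} Fix $\delta$ small. For a set $X$ of vertices with $\vol(X) = s \leq |\mathbf{d}|/2$, estimate the probability, over the uniform pairing, that $\partial(X) < \delta s$. The number of half-edges inside $X$ that must be matched within $X$ is at least $s - \delta s$, i.e. at least $(1-\delta)s$ half-edges pair up among themselves; the probability of this is roughly $\binom{s}{(1-\delta)s}\big/ \text{(ways to place those half-edges)} \le \big( \tfrac{e s}{|\mathbf{d}|} \big)^{(1-\delta)s/2} \cdot (\text{polynomial})$, which is exponentially small in $s$ once $s/|\mathbf{d}|$ is bounded away from $1$ (and the endpoint $s \approx |\mathbf{d}|/2$ is handled separately using that $\vol(\bar X)$ then also $\approx |\mathbf{d}|/2$). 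Then union bound over choices of $X$: the number of vertex sets with volume $s$ is at most $\binom{v(\core)}{|X|}$, and since every degree is $\geq 3$ we have $|X| \leq s/3$, so the entropy term is at most $\binom{Cn}{s/3} \le (3eCn/s)^{s/3}$. Comparing exponents, the per-set probability decays like $\rho^{s}$ with $\rho \to 0$ as $s/|\mathbf{d}| \to 0$, which beats the entropy term $(O(n/s))^{s/3}$ for all $s$ in the range: for small $s$ one uses that $\rho$ is small (choosing $\delta$ small enough), and for $s$ of order $n$ both factors are genuinely exponential and the pairing probability wins. Summing over $s$ from $1$ to $|\mathbf{d}|/2$ gives a total bad-event probability that is $o(1)$ — in fact $e^{-\Omega(n)}$ — with a $\delta$ that depends only on the constants $c, C$ and the minimum degree $3$, hence not on $\theta$.

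\item \emph{Step 4: Assemble.} Combine Steps 1--3: on the whp event of Step 2, the conditional law of the core is close to $\mathrm{CM}(\mathbf{d})$, which by Step 3 is a $\delta$-expander except with exponentially small probability; absorbing the polynomial conditioning cost from Step 1 still leaves $1 - o(1)$. Hence $\core(\mathbf{U_{n,g_n}})$ is a $\delta$-expander whp, with $\delta$ universal.
\end{itemize}

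The main obstacle I expect is Step 1 together with the tail bookkeeping in Step 3: one must make the comparison between the core's true distribution and $\mathrm{CM}(\mathbf{d})$ precise enough that a mere polynomial loss is incurred (so that the exponentially small bound of Step 3 survives), and simultaneously ensure that the first-moment estimate is uniform across the whole range of set volumes $s$ — the delicate regime being $s$ comparable to but slightly below $|\mathbf{d}|/2$, where the naive bound degenerates and one genuinely needs the symmetric roles of $X$ and $\bar X$ in the definition of $h_G$. The uniformity of all constants in $\theta$ must be tracked carefully through Step 2, since that is the whole content of the word ``universal'' in the statement.
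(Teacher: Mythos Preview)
Your overall architecture matches the paper's exactly: condition on the degree sequence $\mathbf d$ of the core, observe that the core is then uniform on $\mathcal U(\mathbf d)$, identify this with $\mathrm{CM}(\mathbf d)$ conditioned to be unicellular, lower-bound $\P(\mathrm{CM}(\mathbf d)\text{ unicellular})$ by $\Omega(1/n)$, and upper-bound the non-expansion probability by a first-moment argument over vertex sets. However, there is a genuine gap in your Step~3, and it is precisely the point where the paper does something you have not anticipated.

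Your claim that the bad-event probability for $\mathrm{CM}(\mathbf d)$ is $e^{-\Omega(n)}$ is false, and even the weaker $o(1/n)$ you actually need is not obtainable from your union bound. The problem is the very small volumes. Take $s=6$: two vertices of degree $3$ can form an isolated component (all six half-edges matched among themselves) with probability $\Theta(n^{-3})$, and there are $\Theta(n^2)$ such pairs whenever the core has linearly many degree-$3$ vertices, giving a $\Theta(1/n)$ contribution. Your entropy-vs-probability comparison $(n/s)^{s/3}\cdot (s/n)^{(1-\delta)s/2}$ yields an exponent $s(1/6-\delta/2)$ in $n^{-1}$, which for $s=6$ is only $1-3\delta$; this is $\Theta(1/n)$, not $o(1/n)$, and certainly not exponentially small. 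Since the conditioning cost from Step~1 is exactly $\Theta(n)$, a $\Theta(1/n)$ failure probability gives you nothing.

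The paper's fix is to \emph{not} bound $\P(\mathrm{CM}(\mathbf d)\text{ is not a }\delta\text{-expander})$ at all, but rather $\P(\mathrm{CM}(\mathbf d)\text{ is connected and not a }\delta\text{-expander})$; this is legitimate because unicellular implies connected, so the numerator in the conditional probability only sees connected configurations. For $\delta<1/6$ and $V\in\{4,6\}$, the bad event $\mathcal E_V$ forces $\partial(X)=0$ and hence disconnection, so these volumes contribute nothing to the numerator. From $V\geq 8$ onward the exponent $V(1/6-\delta/2)>1$ and the sum is genuinely $o(1/n)$. The statement the paper actually proves for the configuration model is therefore
\[
\P(\mathrm{CM}(\mathbf d)\text{ is a }\delta\text{-expander})=1-\P(\mathrm{CM}(\mathbf d)\text{ disconnected})-o(1/n),
\]
and the disconnection term is absorbed for free by the unicellularity conditioning. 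Your Step~3 as written misses this, and without it the argument does not close.

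Two smaller points. Your Step~2 is unnecessary and works against you: the paper's bounds use only $d_i\geq 3$ (hence $k\leq 2n/3$) and are uniform in $\mathbf d$, which is exactly why $\delta$ is universal. By introducing $c=c(\theta)$ and $C=C(\theta)$ and then saying ``$\delta$ depends only on $c,C$'', you have reintroduced a $\theta$-dependence you were trying to avoid. And the regime you flag as delicate, $s$ near $|\mathbf d|/2$, is in fact the easy one (the paper handles $s\geq \eta n$ by a routine large-deviation estimate with exponentially small probability); the delicate regime is $s\in\{4,6\}$.
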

In the next section, we will prove Proposition~\ref{prop_core}, and Section~\ref{sec_almost} is devoted to the proof of Proposition~\ref{prop_core_M}.
\section{The core is an expander}\label{sec_core_exp}
Here, we will prove Proposition~\ref{prop_core} by comparing \cu{} to a well chosen map configuration model. We begin with two results about this model. We will consider sets $\d=(d_1,d_2,\ldots,d_k)$ that do not contain any $1$'s or $2$'s, with $|\d|=2n$, such that  $k+n$ is odd.

\begin{lem}\label{lem_config_unicellular}
The map \cmd{} is unicellular with probability greater than
\[\frac {1+o(1) }{3n}\]
as $n\to\infty$, where the $o(1)$ is independent of $\d$.
\end{lem}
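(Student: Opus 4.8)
The plan is to turn the statement into an exact character sum on the symmetric group and then estimate it, keeping only what a lower bound needs. Fix any $\sigma\in\S_{2n}$ of cycle type $\d$. By construction $\mathrm{CM}(\d)$ is $\sigma$ (as rotation system) together with a uniformly random fixed-point-free involution $\alpha$ of the $2n$ half-edges; its number of faces is the number of cycles of $\sigma\alpha$, and ``unicellular'' means $\sigma\alpha$ is a single $2n$-cycle (one face already forces connectedness). Hence
\[
\P\big(\mathrm{CM}(\d)\text{ unicellular}\big)=\frac{\#\{\alpha\in C_{2^n}:\ \sigma\alpha\in C_{(2n)}\}}{(2n-1)!!},
\]
where $C_\mu$ is the class of cycle type $\mu$. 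The numerator does not depend on the choice of $\sigma$ within $C_\d$, so a one-line double counting with the Frobenius product formula for $\S_{2n}$ gives the closed form
\[
\P\big(\mathrm{CM}(\d)\text{ unicellular}\big)=\frac1{2n}\sum_{\lambda\vdash 2n}\frac{\chi^\lambda(\d)\,\chi^\lambda(2^n)\,\chi^\lambda\!\big((2n)\big)}{\dim\lambda}.
\]
Only hook shapes $\lambda=(2n-h,1^h)$ contribute, with $\chi^\lambda((2n))=(-1)^h$ and $\dim\lambda=\binom{2n-1}{h}$; the remaining character values are read off from $\sum_h\chi^{(2n-h,1^h)}(\mu)\,t^h=(1+t)^{-1}\prod_i(1-(-t)^{\mu_i})$, which in particular yields $\chi^{(2n-h,1^h)}(2^n)=[t^h]\,(1-t)(1-t^2)^{n-1}$. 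Thus everything reduces to bounding from below, uniformly in $\d$, the sum $\Sigma:=\sum_{h=0}^{2n-1}(-1)^h\binom{2n-1}{h}^{-1}\chi^{(2n-h,1^h)}(\d)\,\chi^{(2n-h,1^h)}(2^n)$, since $\P=\Sigma/(2n)$.

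The heart of the matter is the two extreme terms. The $h=0$ summand (trivial representation) is $1$; the $h=2n-1$ summand (sign representation) is $(-1)^{2n-1}\sgn(\d)\,\sgn(2^n)=(-1)^{1+k+n}$, which is $+1$ precisely because $k+n$ is odd — this is exactly where the hypothesis on $\d$ is used (if $k+n$ were even the two contributions would cancel, consistently with $\mathcal U(\d)=\emptyset$). So $\Sigma\geq 2-R$, $R$ being the absolute value of the sum over $1\leq h\leq 2n-2$, and I would show $R=o(1)$ uniformly in $\d$ in two ranges. For $h$ bounded, the hypothesis that $\d$ has no part $\leq 2$ forces $\chi^{(2n-h,1^h)}(\d)$ to equal its tree value $(-1)^h$ for $h\leq 2$ and, in general, $|\chi^{(2n-h,1^h)}(\d)|$ to be at most the number of sub-multisets of $\d$ of total size $\leq h$ (from the generating function), while $|\chi^{(2n-h,1^h)}(2^n)|\leq\binom{n}{\lfloor h/2\rfloor}$; with $\binom{2n-1}{h}\geq c\,\binom{n}{\lfloor h/2\rfloor}^2$ (for $h\leq n$, using Vandermonde), the bounded-$h$ — and, by the symmetry $h\leftrightarrow 2n-1-h$ valid when $k+n$ is odd, the near-$2n$ — contribution is $O(1/n)$. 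For $h$ in the complementary bulk, $\binom{2n-1}{h}$ is superpolynomially large, and since every cycle of $\d$ has length $\geq 3$ and every cycle of $2^n$ has length $2$, Fomin–Lulov / Larsen–Shalev-type bounds give $|\chi^\lambda(\d)|\ll(\dim\lambda)^{1/3}$ and $|\chi^\lambda(2^n)|\ll(\dim\lambda)^{1/2}$, so each summand is $\ll(\dim\lambda)^{-1/6}$ and the bulk contributes $o(1)$. Hence $\Sigma=2+o(1)$ and $\P=(1+o(1))/n$, which is in particular $\geq(1+o(1))/(3n)$.

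Since only the lower bound is claimed there is slack, and a purely combinatorial route is also available: standard configuration-model bookkeeping expresses the left-hand side, up to a ratio of symmetry factors, through $|\mathcal U(\d)|$, and the bijection of \cite{CFF13} (or the Lehman–Walsh formula) then supplies the required lower bound on $|\mathcal U(\d)|$, the planar (tree) part of the bijection already sufficing and the handles only helping.

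I expect the main obstacle to be the uniformity in $\d$. The admissible $\d$ range from nearly balanced ($\d\approx(3,\dots,3)$, so $k\asymp n$) to very unbalanced (a bounded number of parts of size $\asymp n$), and the elementary coefficient estimates for $\chi^{(2n-h,1^h)}(\d)$ degrade in the mesoscopic window $n^{1/3}\lesssim h\lesssim\varepsilon n$; this is precisely what forces the use of Larsen–Shalev-type character bounds, applied in a form whose implied constants are genuinely independent of $\d$, and a few degenerate regimes (for instance $k$ bounded, or a single dominant part) should be checked separately to make sure the asymptotics do not break down there.
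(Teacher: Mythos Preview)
Your approach is genuinely different from the paper's and, modulo making the bulk character bound precise, actually yields a sharper conclusion. The paper does not touch representation theory: it imports a probabilistic decomposition from Budzinski--Curien--Petri \cite{BCP19}, which (in the dual picture) writes the number of faces of $\mathrm{CM}(\d)$ as an independent sum $X_{\tau_n}+V_{2(n-\tau_n)}$, where $X_{\tau_n}$ is asymptotically Poisson with mean $\log\tfrac{n}{n-k}$ and $V_{2p}$ is the vertex count of a uniform one-face map on $p$ edges. The hypothesis $d_i\geq 3$ gives $k\leq 2n/3$, hence $\P(X_{\tau_n}=0)\geq 1/3+o(1)$; the exact identity $\P(V_{2p}=1)=1/(p+1)$ (via Harer--Zagier / Goupil--Schaeffer), together with a parity check forcing $n-\tau_n$ even, then yields the stated $1/(3n)$ bound. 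The factor $1/3$ is thus an artifact of the argument, tight only near $\d=(3,\dots,3)$. Your Frobenius computation is self-contained, produces the correct leading constant $\P\sim 1/n$, and makes the role of the parity hypothesis $k+n$ odd completely transparent via the sign-representation term.

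One caution on the bulk estimate: the off-the-shelf Larsen--Shalev / Fomin--Lulov statements bound $|\chi^\lambda(\sigma)|$ in terms of the number of cycles or (for Fomin--Lulov) require all cycles of the \emph{same} length, so ``all parts $\geq 3$'' does not literally give $|\chi^\lambda(\d)|\ll(\dim\lambda)^{1/3}$ by citation. This is not fatal, because you never leave hook shapes: the explicit generating function $\sum_h\chi^{(2n-h,1^h)}(\d)t^h=(1+t)^{-1}\prod_i(1-(-t)^{d_i})$ lets you bound the coefficients directly by a saddle-point/Cauchy argument (using only $d_i\geq 3$ and $k\leq 2n/3$), and combined with your exact expression for $\chi^{(2n-h,1^h)}(2^n)$ this does control the bulk uniformly in $\d$. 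So the strategy is sound, but the justification for that step should be the explicit hook formula rather than a black-box character bound.
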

The proof of this lemma is a little technical, it actually needs a refinement of an argument of \cite{BCP19}, it will be given in the appendix.
%
This next proposition states that \cmd{} is an expander with very high probability.

\begin{prop}\label{prop_config_expander}
The map \cmd{} is a $\delta$-expander (with the same $\delta$ as in Proposition~\ref{prop_core}) with probability
\[1-\P(\text{\cmd{} is disconnected})-o\left(\frac 1 n\right)\]
as $n\to\infty$, where the $o\left(\frac 1 n\right)$ is independent of $\d$.
\end{prop}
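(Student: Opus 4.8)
The plan is to prove that the configuration model $\text{CM}(\d)$ is a $\delta$-expander with the claimed probability by a first-moment (union bound) argument over all potentially ``bad'' vertex sets, exploiting the fact that the degree sequence has minimum degree at least $3$. The key point is that on such a degree sequence, a random pairing of half-edges is, with overwhelming probability, an expander — this is a standard fact for random regular/configuration-model graphs, and here we only need it quantitatively enough to beat the $o(1/n)$ error term.

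\medskip

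\textbf{Step 1: Reduce to counting edges leaving small sets.} Recall that to check the $\delta$-expander condition it suffices to consider subsets $X$ with $\text{vol}(X) \leq \text{vol}(\overline X) = n$, and moreover (by the footnote in the definition) we may assume $G[X]$ is connected. For such $X$ we must show $\partial(X) \geq \delta\, \text{vol}(X)$. I would fix a small threshold: for $\text{vol}(X)$ very small (say $\text{vol}(X)\leq \log n$, or even bounded), connectedness of $G[X]$ together with $\partial(X)<\delta\,\text{vol}(X)$ forces $X$ to contain an unusually dense small subgraph (many internal edges relative to its volume), and the expected number of such dense subgraphs in $\text{CM}(\d)$ is $o(1/n)$ — this is where loops, multiple edges, and short cycles get controlled, using that there are no degree-$1$ or degree-$2$ vertices. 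For intermediate and large $\text{vol}(X)$ (up to $n$), I would run the direct union bound below.

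\medskip

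\textbf{Step 2: The union bound for a fixed set of half-edges.} Fix a set $S$ of half-edges with $|S| = s \leq n$ (this $S$ will be the union of half-edges incident to the vertices of $X$, so $s = \text{vol}(X)$). Condition on which half-edges lie in $S$. In the random uniform pairing, the number of pairs with both endpoints in $S$ is a hypergeometric-type quantity; the event $\partial(X) < \delta s$ means at least $s/2 - \delta s/2 = (1-\delta)s/2$ of the half-edges in $S$ are matched internally, i.e. all but at most $\delta s$ of them. Using the standard sequential-pairing estimate, the probability that a fixed set of $\geq (1-\delta)s/2$ disjoint pairs inside $S$ all occur is bounded by something like $\left(\frac{s}{2n}\right)^{(1-\delta)s/2}$ up to constants, and the number of ways to choose those pairs is at most $\binom{s}{(1-\delta)s} \cdot (\text{pairing count})$. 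Multiplying by the number of ways to choose $X$ consistent with $S$ — here one uses that $X$ is a union of whole vertices, so the number of choices of $X$ with $\text{vol}(X) = s$ is at most $\binom{k}{|X|} \leq 2^k \leq 2^{(n+k)/1}$, which is dangerous and must be absorbed. This is the crux: I would choose $\delta$ small enough (it is universal!) that the exponential gain $\left(\frac{s}{2n}\right)^{(1-\delta)s/2}$ — which is genuinely small precisely because $s \leq n$ and because the minimum degree $\geq 3$ forces $|X| \leq s/3$ so that $\binom{k}{|X|}$ with $|X|\leq s/3$ is only $2^{O(s)}$ rather than $2^{O(n)}$ — dominates. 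Summing the resulting bound $\sum_{s} \left(\text{const} \cdot \frac{s}{n}\right)^{c s}$ over $s$ from the Step-1 threshold up to $n$ gives $o(1/n)$, as required, with the $o(1/n)$ manifestly independent of the particular $\d$ (it depends only on $s/n$, $k/n$, $\delta$).

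\medskip

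\textbf{Step 3: Assemble.} Combining the small-set estimate (Step 1) and the bulk union bound (Step 2), the total probability that $\text{CM}(\d)$ fails to be a $\delta$-expander \emph{and} is connected is $o(1/n)$ uniformly in $\d$; adding back $\P(\text{CM}(\d)\text{ disconnected})$ gives the stated bound $1 - \P(\text{CM}(\d)\text{ disconnected}) - o(1/n)$. The main obstacle, as indicated, is bookkeeping the competition between the $2^{O(s)}$ entropy of choosing $X$ and the polynomial-in-$s/n$ smallness of each pairing event: one must use the degree-$\geq 3$ hypothesis both to limit $|X|$ in terms of $s$ and to kill dense small configurations, and one must keep every estimate uniform in $\d$ so that the error genuinely does not depend on the degree sequence — a point that matters because in the next section this proposition will be applied conditionally to the (random) degree sequence of $\core(\mathbf{U_{n,g}})$ via Lemma~\ref{lem_config_unicellular}.
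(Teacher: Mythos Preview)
Your overall strategy---a first-moment union bound over vertex sets, organized by volume, using the minimum-degree-$3$ hypothesis to control the number of candidate sets---is exactly the paper's approach. However, there are two genuine gaps in your outline.

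\textbf{The very small sets.} Your Step~1 claims that for bounded $\vol(X)$ the ``dense small subgraphs'' occur with expected number $o(1/n)$. This is false in general: if $\d$ contains $\Theta(n)$ vertices of degree $3$, the expected number of pairs of degree-$3$ vertices matched entirely to each other is $\Theta(n^2)\cdot\Theta(n^{-3})=\Theta(1/n)$, not $o(1/n)$. This is precisely why the statement of the proposition subtracts $\P(\text{CM}(\d)\text{ disconnected})$ rather than absorbing it into the $o(1/n)$. The paper's proof makes this explicit: for $\delta<1/6$, the bad event at volumes $V\in\{4,6\}$ forces $\partial(X)=0$, hence disconnection, and these cases are simply charged to $\P(\text{disconnected})$. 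The union-bound computation only begins at $V\geq 8$, where the exponent $V/7$ is already $>1$ and each term is $o(1/n)$. You gesture at this in Step~3 (``fails to be an expander \emph{and} is connected''), but your Step~1 as written asserts something incorrect.

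\textbf{The large sets.} Your Step~2 summand $\bigl(\text{const}\cdot s/n\bigr)^{cs}$ is only small when $s/n$ is bounded away from $1$; once $s$ is of order $n$ the base approaches a constant and the bound is useless (indeed the paper's small-set estimate $(2e)^{V/3}(V/n)^{V/7}$ blows up for $V$ near $n$). The paper therefore splits at a threshold $\eta n$: for $V\leq\eta n$ it runs the direct union bound you describe, but for $V\geq\eta n$ it uses a sharper Stirling computation on the exact expected number of volume-$un$ sets with exactly $yn$ outgoing edges, showing the relevant exponential rate $f(u,y)$ is strictly negative for all $u\in[\eta,1]$ and $y<\eta\delta$. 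Your outline does not contain this second regime, and without it the sum over $s$ up to $n$ does not close.
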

The proof is rather technical, but it is heavily inspired by \cite{HLW06,KW14}. A careful analysis of the cases is needed, but there is no original idea involved, hence we delay it to the appendix.

We are now ready to prove Proposition~\ref{prop_core}.
\begin{proof}[Proof of Proposition~\ref{prop_core}]
Let $\d=\d(\core(\mathbf{U_{n,g_n}}))$. The map \cu{} has genus $g_n\to\infty$ and only vertices of degree greater or equal to $3$, hence $|\d|\to \infty$.

Now, conditionally on $\d$, \cu{} is uniform in \Ud{}. Also, \cmd{} conditioned on having one face is uniform in \Ud{}. Hence, the probability of \cu{} not being a $\delta$-expander is
\[\P\left(\text{\cmd{} is not a $\delta$-expander}|\text{\cmd{} is unicellular} \right)\]
which we can upper bound by
\[\frac{\P\left(\text{\cmd{} is connected and not a $\delta$-expander}\right)}{\P(\text{\cmd{} is unicellular})}\]
(because all unicellular maps are connected).
This is $o(1)$ by Lemma~\ref{lem_config_unicellular} and Proposition~\ref{prop_config_expander}. This $o(1)$ does not depend on $\d$, hence the proof is finished.
\end{proof}
\section{Almost-core decomposition}\label{sec_almost}
In this section, we prove Proposition~\ref{prop_core_M}. Our strategy is the following: now that we know that \cu{} is an expander, we will add back to it the ``small" branches of \u{} to get very close to the size of \u{} without penalizing the expansion too much. We start with technical lemmas. The first one states that replacing edges by small doubly rooted trees does not change the Cheeger constant too much.
\begin{lem}\label{lem_adding_branches}
Let $H$ be a graph and $G$ be constructed by replacing each edge of $H$ by a doubly rooted tree of size $M$ or less. Then
\[h_G\geq \frac{h_H}{2M+1}.\]
\end{lem}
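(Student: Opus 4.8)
The statement to prove is Lemma~\ref{lem_adding_branches}: if $G$ is obtained from $H$ by replacing every edge by a doubly rooted tree of size at most $M$, then $h_G \geq h_H/(2M+1)$. First I would set up the correspondence between vertex subsets of $H$ and of $G$. Every vertex of $H$ survives in $G$, and each edge $e=uv$ of $H$ is replaced by a doubly rooted tree $t_e$ whose two roots are identified with $u$ and $v$; the internal vertices of $t_e$ are new. So the vertex set of $G$ is $V(H)$ together with the disjoint union of the internal vertices of the trees $t_e$. Note $e(G) = \sum_e e(t_e)$, each $t_e$ has at most $M$ edges, and the total volume satisfies $\vol_G(V(G)) = 2e(G) \leq 2M\, e(H) = M\,\vol_H(V(H))$.

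Given a subset $Y$ of vertices of $G$ with $Y,\overline Y$ both nonempty, I want to produce a subset $X$ of $V(H)$ comparing favorably. The natural choice is $X = Y \cap V(H)$. The two things to control are the numerator $\partial_G(Y)$ versus $\partial_H(X)$, and the denominator $\min(\vol_G(Y),\vol_G(\overline Y))$ versus $\min(\vol_H(X),\vol_H(\overline X))$. For the numerator: whenever an edge $uv$ of $H$ has $u\in X$, $v\notin X$ (so it contributes to $\partial_H(X)$), the path through $t_e$ from $u$ to $v$ in $G$ must cross the boundary of $Y$ at least once, contributing at least one edge to $\partial_G(Y)$; distinct edges of $H$ give disjoint trees hence disjoint contributions, so $\partial_G(Y) \geq \partial_H(X)$. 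For the denominator: I would bound $\vol_G(Y)$ in terms of $\vol_H(X)$ by charging each internal vertex of a tree $t_e$ to an endpoint of $e$ lying in $X$ — but this needs care, because $Y$ might contain internal vertices of trees $t_e$ whose $H$-endpoints are \emph{both} outside $X$. Such ``orphan'' pieces of $Y$ must themselves be separated from the rest, and the key point is that for a tree $t_e$ with both roots outside $Y$, the set $Y\cap t_e$ has boundary (inside $t_e$) at least its volume divided by roughly $2M$, because a subtree's edge boundary is comparable to its volume when the ambient tree is small. This lets me absorb orphan contributions into $\partial_G(Y)$.

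The cleanest way to organize this is: show $\vol_G(Y) \leq \vol_H(X) \cdot (2M+1) + (\text{something} \leq (2M+1)\partial_G(Y))$, or more robustly, split into two cases according to which of $\vol_G(Y), \vol_G(\overline Y)$ is smaller and symmetrically replace $Y$ by $\overline Y$ if needed, so WLOG $\vol_G(Y)\leq \vol_G(\overline Y)$. Then one wants $\partial_G(Y) \geq \frac{1}{2M+1}\vol_G(Y)$ directly, OR $\partial_G(Y)\geq \partial_H(X)\geq h_H \min(\vol_H(X),\vol_H(\overline X))\geq \frac{h_H}{2M+1}\vol_G(Y)$ after relating volumes. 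I expect the main obstacle to be exactly the bookkeeping for these orphan tree-pieces and the edges of $G$ lying entirely inside a single replaced tree: one must verify that every edge of $G$ incident to $Y$ is either counted by $\partial_H(X)$ (after blow-up) or is "paid for" by the local expansion of a small tree, and that the constant $2M+1$ (rather than $2M$ or $M+1$) is what survives after the worst case — a long pending path of length $M$ attached at a vertex of $X$, which contributes $M$ to $\vol_H(X)$'s worth of edges but up to $2M$ in volume to $Y$, plus the one boundary edge, giving the $2M+1$. I would present this as a clean inequality chain once the charging is set up, deferring no genuinely hard step — the difficulty is combinatorial care, not depth.
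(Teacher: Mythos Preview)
Your plan is essentially the paper's argument. You take $X=Y\cap V(H)$, use the root-to-root paths in each $t_e$ to get $\partial_G(Y)\geq\partial_H(X)$, and then compare $\vol_G$ to $\vol_H$ up to a factor $O(M)$. The one shortcut you are missing is that it suffices to bound $h_G(Y)$ for $Y$ with $G[Y]$ \emph{connected} (the paper records this in a footnote in the definition of expander). Under that hypothesis, if $X\neq\emptyset$ then every black vertex of $Y$ lies in a tree $t_e$ having at least one root in $X$: otherwise its connected component in $G[Y]$ would contain no red vertex, contradicting connectedness. So there are no ``orphan'' pieces in $Y$, every tree contributing black volume to $Y$ is incident to $X$, and one gets $\vol_G(Y)\leq(2M-1)\vol_H(X)$ at once; the paper argues the analogous bound for $\overline Y$, and the degenerate cases $X=\emptyset$ or $\overline X=\emptyset$ are handled by the crude estimate that a subset of black vertices inside trees of size $\leq M$ has Cheeger ratio $\geq 1/(2M)$. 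Your orphan-absorption scheme can certainly be pushed through, but the connectedness reduction is exactly what collapses the bookkeeping you anticipate into three displayed inequalities.
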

\begin{proof}
This proof is very similar to the proof of Lemma 5 of \cite{LS20}. 

In ${G}$, colour in red the vertices that come from $H$, and the rest in black. 
Let $Y$ be a subset of $V({G})$ such that  ${G}[Y]$ is connected (recall that we only need to consider connected subsets).  
Let $X$ be the set of red vertices in $Y$. We want to lower bound $h_{G}(Y)$ in terms of $h_H(X)$. See Figure~\ref{fig_go_big} for an illustration. 

If $X=\emptyset$, then $G[Y]$ is a tree on at most $ M-1$ vertices and so $\vol_G(Y)\leq 2(M-1)$ and $e_G(Y,\overline{Y})=2$. Hence 
\[h_{G}(Y)\geq \frac{e_G(Y,\overline{Y})}{\vol_G(Y)} \geq \frac{1}{M-1}.\]
Similarly if $\overline{X}=\emptyset$ then $h_G(Y)=h_G(\overline{Y})\geq 1/(M-1)$. 

Now, consider the case $X\neq\emptyset$ and $\overline{X}\neq \emptyset$. The number of edges of $H$ which are incident to a vertex of $X$ is $e_H(X)+e_H(X,\overline{X})\leq \vol_H(X)$. Each edge of $H$ is replaced by a tree with at most~$M$ edges, thus of volume at most $2(M-1)$. Therefore the total degree of the black vertices in $Y$ can be bounded above by~$2(M-1)\vol_H(X)$. Hence
\begin{equation}\label{e1}
    \vol_G(Y)\leq\vol_H(X)+2 (M-1) \vol_H(X)=(2M-1)\vol_H(X)
\end{equation}
and similarly
\begin{equation}\label{e2}
    \vol_G(\overline Y)\leq (2M-1)\vol_H(\overline X).
\end{equation}
Now, each edge counted in $e_H(X,\overline{X})$ corresponds to a \dbr{} in $G$ between $Y$ and
$\overline{Y}$, therefore
\begin{equation}\label{e3}
  e_{G}(Y,\overline{Y}) \geq e_H(X,\overline{X}).  
\end{equation}

Hence, by \eqref{e1}, \eqref{e2} and~\eqref{e3}:
\[h_G(Y) \geq \frac{1}{2M-1}h_H(X).\]
This concludes the proof.

\end{proof}

\begin{figure}
\center
\includegraphics[scale=0.5]{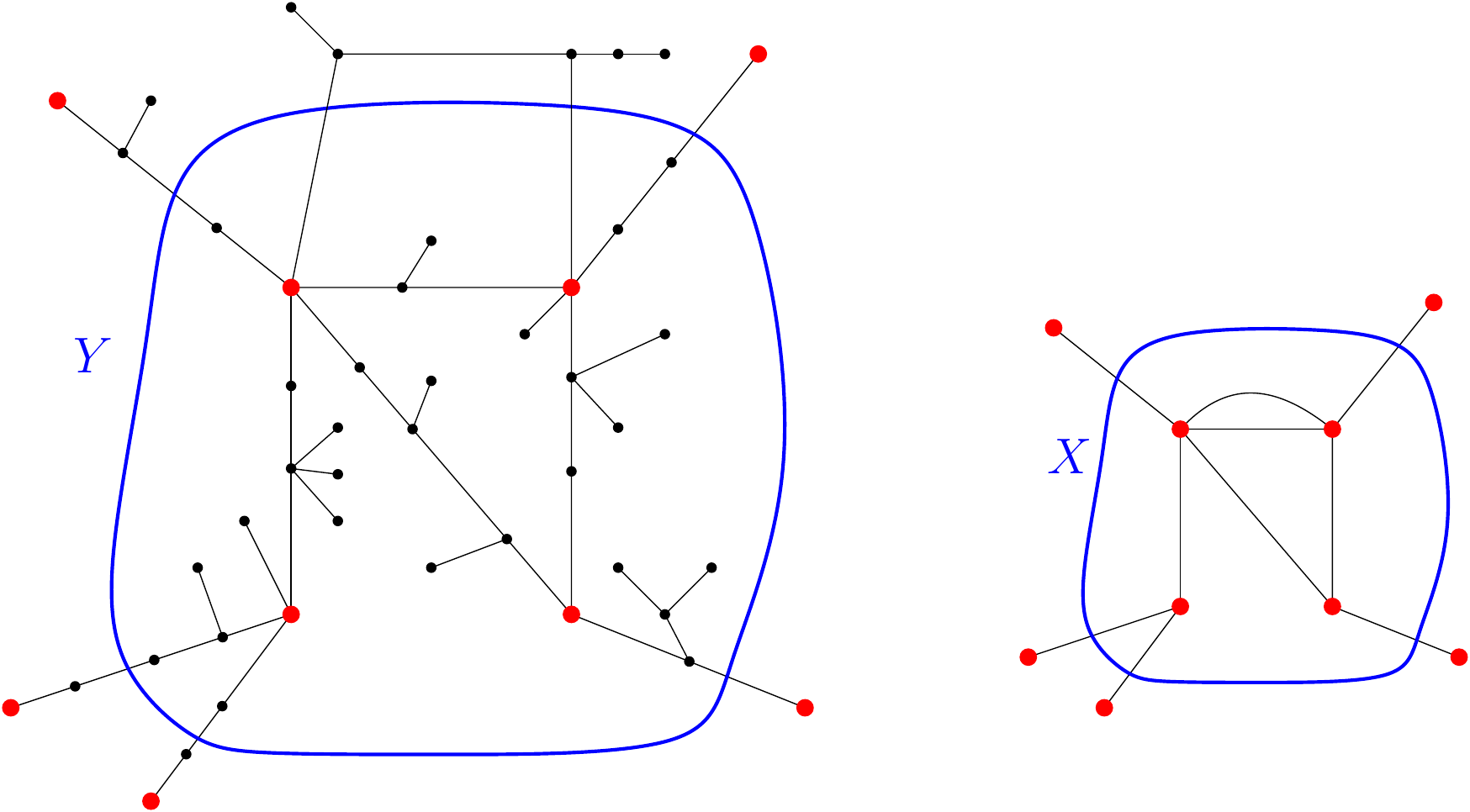}
\caption{Comparing the edge expansions of $Y$ in $G$ and $X$ in $H$. Here, $M=7$.}\label{fig_go_big}
\end{figure}

The next lemma states that the big branches of \u{} only make up for a very small proportion of its size.
\begin{lem}\label{lem_few_big_branches}
For all $\eps>0$, there exists a constant $M$ such that, whp, the total size of the branches of \u{} that are bigger than $M$ is less than $\eps n$.
\end{lem}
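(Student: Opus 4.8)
The plan is to exploit the fact, recalled in the excerpt, that \u{} is obtained from \cu{} by substituting each of its $e(\core)$ edges with an independent-looking doubly rooted tree, so that the branch sizes are (up to a conditioning on the total) governed by the law of the size of a single doubly rooted tree. Concretely, I would first argue that, conditionally on the core having $c$ edges (equivalently, on the number $V=n+1-2g_n$ of vertices and hence on $c = n - (\text{size of the removed forest})$), the multiset of branch sizes $(S_1,\dots,S_c)$ has the distribution of $c$ i.i.d.\ copies of a random variable $S$ with $\P(S=j)\propto dt_j x^j$ for an appropriate $x$, conditioned on $\sum S_i = n - c$; here $dt_j$ is the number of doubly rooted trees of size $j$ introduced in the Definitions section. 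The tail of $S$ is controlled by the asymptotics $dt_j \sim \text{(const)}\, 4^j j^{c_0}$ (a standard computation: a doubly rooted tree of size $j$ is a plane tree of size $j$ with an ordered pair of distinct marked vertices, so $dt_j = j(j+1)\mathrm{Cat}_j \sim \text{const}\cdot 4^j j^{3/2}$ up to the usual polynomial correction), which forces $x$ close to $1/4$ and gives $S$ a subexponential but summable-after-threshold tail.

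Next I would convert the statement "total size of branches $\geq M$ is $\leq \eps n$'' into a first-moment estimate. Write the quantity of interest as $T_M := \sum_{i : S_i \geq M} S_i = \sum_{i=1}^{c} S_i \indic_{S_i \geq M}$. Using the i.i.d.-conditioned-on-the-sum description and the fact that $c \leq n$, one has $\E[T_M] \leq n \cdot \E[S\,\indic_{S\geq M}]$ up to a bounded multiplicative constant coming from removing the conditioning on the total (the conditioning changes probabilities by at most a polynomial factor, by a local limit theorem for the sum of the $S_i$, or more cheaply by the ratio $dt_{n-c}^{(\star c)}/\text{(total)}$ being bounded below). Since $\sum_j j\,\P(S=j) < \infty$, the tail sum $\E[S\,\indic_{S\geq M}] = \sum_{j\geq M} j\,\P(S=j) \to 0$ as $M\to\infty$; choose $M = M(\eps,\theta)$ so that this is less than $\eps/2$ (say). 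Then $\E[T_M] \leq \tfrac{\eps}{2} n$ (absorbing the constant into the choice of $M$), and Markov's inequality gives $\P(T_M \geq \eps n) \leq 1/2$ — not yet $o(1)$, so a little more care is needed.

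To upgrade $1/2$ to $o(1)$ I would instead apply Markov at a scale just below $\eps n$: pick $M$ large enough that $\E[S\,\indic_{S\geq M}]$ is smaller than, say, $\eps^2$, so that $\E[T_M] \leq \eps^2 n = \eps \cdot (\eps n)$, whence $\P(T_M \geq \eps n)\leq \eps$. Since $\eps$ is fixed this still does not give $o(1)$; the clean fix is to prove the quantitative statement "for every $\eps,\eta>0$ there is $M$ with $\P(T_M \geq \eps n) \leq \eta$ eventually'' and note that in the application to Proposition~\ref{prop_core_M} one only ever needs finitely many thresholds — but since the lemma as stated asks for whp, I would instead use a concentration bound: $T_M$ is a function of the branch multiset, and conditionally on $c$ it is a sum of i.i.d.\ (conditioned) contributions, so either a truncated Chernoff/Bernstein bound or a bounded-differences / Azuma argument on the pairing in \cmd{} yields $\P(|T_M - \E T_M| \geq \delta n) = o(1)$ for fixed $\delta$, and combining with $\E[T_M] = o(n) + O(1)\cdot n \E[S\indic_{S\ge M}] \le \tfrac{\eps}{2}n$ finishes the proof. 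The main obstacle is the bookkeeping around the conditioning $\sum_i S_i = n - c$ together with the randomness of $c$ itself: one must check that neither the number of core edges is atypically small (which follows from \cu{} having $|\d|\to\infty$ and a known linear lower bound on the core size in high genus, cf.\ the results invoked for Proposition~\ref{prop_core}) nor that the conditioning inflates the large-branch probabilities by more than a constant — this is where a local-limit-theorem input for sums of the branch-size variable is the technically delicate point, and it is presumably why the author defers the full argument to the appendix.
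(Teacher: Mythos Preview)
Your final plan is essentially the paper's proof: model the branch sizes as i.i.d.\ copies of a tilted variable $Y_\beta$ with $\P(Y_\beta=j)\propto dt_j\beta^j$, use a local limit theorem to show the conditioning on the total $\sum Y_i = n$ costs only a $\Theta(1/\sqrt n)$ factor, and beat that with an exponential (Chernoff) bound on $\sum_i Y_i \indic_{Y_i\ge M}$. Two remarks. First, the meander through plain Markov is unnecessary and, as you note, cannot yield whp; go straight to the exponential-moment bound, which works because the core has at least $2g_n\sim 2\theta n$ edges, forcing the tilt $\beta$ to stay bounded \emph{strictly below} $1/4$ uniformly in $n$ --- your phrase ``$x$ close to $1/4$'' and ``subexponential tail'' understates this, and it is the crucial point that makes $\E[A^{Y_\beta}]<\infty$ for some $A>1$ and hence gives an exponentially small large-deviation probability that survives the $1/\sqrt n$ depoissonization cost. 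Second, the alternative you float --- an Azuma/bounded-differences argument ``on the pairing in \cmd{}'' --- is off-track: the branch decomposition has nothing to do with the configuration model (that model appears only in the analysis of the core's expansion), and in any case a single branch can have size $\Theta(n)$ so the Lipschitz constant would be too large. Stick with the Chernoff route; the paper does exactly this, obtaining $\P(L^{(M)}>\eps n\mid S=n)\le \sqrt 2^{-n}$ after bounding $\E[B^{Y_i\indic_{Y_i\ge M}}]$ for a suitable $B>1$ and $M$ large.
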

As for Proposition~\ref{prop_config_expander}, we postpone the proof to the appendix. We need a precise estimation of the second moment of the size of large branches, and it makes it a bit technical.

We are now ready to prove Proposition~\ref{prop_core_M}.

\begin{proof}[Proof of Proposition~\ref{prop_core_M}]
By Lemma~\ref{lem_few_big_branches}, we know that there exists an $M$ depending only on $\eps$ and $\theta$ such that \cM{} contains at least $(1-\eps)n$ edges whp. By Proposition~\ref{prop_core}, \cu{} is a \dexp{} whp. Now, \cM{} can be constructed out of \cu{} by replacing each edge by a \dbr{} of size $M$ or less, hence if we set $\kappa=\frac{\delta}{2M-1}$, then \cM{} is a \kexp{} whp.
\end{proof}
\appendix
\section{Proof of Lemma~\ref{lem_config_unicellular}}
Recall that we have a set $\d=(d_1,d_2,\ldots,d_k)$ that does not contain any $1$'s or $2$'s, with $|\d|=2n$, such that  $k+n$ is odd. We want to show that \cmd{} has only one face with probability $\Theta\left(\frac {1 }{n}\right)$. Our proof consists in estimating some quantities carefully in an argument of \cite{BCP19}. We however do not know of a more direct proof.

In \cite{BCP19}, the authors consider a model that is dual to ours, i.e. they glue polygons together, with the condition that there are few one-gons and digons (our case fits into their assumptions, since we have none). More precisely, they have as a parameter a list $\mathcal P_n$ of sizes of polygons that sum to $2n$ (this corresponds to our $\d$). The list $\mathcal P_n$ contains $\#\mathcal P_n$ elements (this corresponds to our $k$). An important parameter in their proofs is a random number $0\leq \tau_n\leq n$.

In section 4 of \cite{BCP19}, they control the number of vertices of their map, which is the number of faces in \cmd{}. More precisely, equation 13 writes the number of vertices as
\begin{equation}\label{eq_vertices}
X_{\tau_n}+V_{2(n-\tau_n)}.
\end{equation}
Let us define the notions used in~\eqref{eq_vertices}. First, if we condition on $\tau_n$, then both terms in~\eqref{eq_vertices} are independent. From now on, we condition on $\tau_n$.

It is shown in Section 4.3 that
\[X_{\tau_n}\stackrel{d_{\text{TV}}}{=}(1+o(1))\text{Poisson}\left(\log\left(\frac n {n-\#\mathcal P_n}\right)\right),\]
(note that this $o(1)$ can be made uniform in $\mathcal P_n$ by a classical diagonal argument). In particular, since we have no one-gons or digons, we have $\#\mathcal P_n\leq \frac 2 3 n$. This implies that
\begin{equation}\label{eq_p1}
\P(X_{\tau_n}=0)\geq 1/3 +o(1).
\end{equation}

Now let us turn to $V_{2(n-\tau_n)}$. For any $p$, $V_{2p}$ is the number of vertices in a uniform unicellular map on $p$ edges. We can calculate $\P(V_{2p}=1)$ for even $p$ :
\begin{equation}\label{eq_proba_one_vertex}
\P(V_{2p}=1)=\frac{\# \text{ of unicellular maps on $p$ edges with one vertex}}{\# \text{ of unicellular maps on $p$ edges}}.
\end{equation}

The denominator in the formula above is easy to enumerate, it is $(2p-1)!!$ (number of ways to pair the edges in a $2p$-gon). To enumerate the numerator, we will use \cite{GS98}, more precisely equation 14 for $x=1$, and then Corollary 4.2 for $g=p/2$. It is equal to 
\[\frac{(2p)!}{2^pp!(p+1)}.\]
Therefore, we have exactly 
\[P(V_{2p}=1)=\frac{1}{p+1}\]
if $p$ is even.
At the end of Section 4 in \cite{BCP19} (proof of Theorem 3), it is shown that $X_{\tau_n}+n-\tau_n+1$ has the same parity as $n+\#\mathcal P_n$, which corresponds to $n+k$ in our case (and we require it to be odd), therefore $n-\tau_n$ is even, if we condition on $X_{\tau_n}=0$.
Hence
\begin{equation}\label{eq_p2}
\P(V_{2(n-\tau_n)}=1|X_{\tau_n}=0)\geq \frac 1 {n-\tau_n+1}\geq \frac 1 {n+1}.
\end{equation}

We are ready to conclude the proof of Lemma~\ref{lem_config_unicellular}. 
Conditionally on $\tau_n$, by~\eqref{eq_p1} and ~\eqref{eq_p2}, the probability that \cmd{} has exactly one face is
\[\P(X_{\tau_n}=0)\P(V_{2(n-\tau_n)}=1|X_{\tau_n}=0)\geq\frac {1+o(1)}{3n}.\]
This quantity is independent of $\tau_n$ and uniform in $\d$, hence it finishes the proof of Lemma~\ref{lem_config_unicellular}.

\section{Proof of Proposition~\ref{prop_config_expander}}
We recall that we work with a set of vertex degrees $\d$ such that $|\d|=2n$ (the dependence of $\d$ in $n$ will be implicit). We want to prove that \cmd{} is a \dexp{} with very high probability, for some \textbf{universal} $\delta>0$. For the sake of simplicity, we will not make $\delta$ explicit, but we will show that it exists.

In what follows, we will consider $\d$ as an ordered list $(d_1,d_2,\ldots,d_k)$, and we will have a list of vertices $(v_1,v_2,\ldots,v_k)$ equipped with distinguishable dangling \he{s}, where $v_i$ has degree $d_i$. For all $I\subset[k]$, we set $\vol(I)=\sum_{i\in I} d_i$, and $N_V(\d)$ is the number of sets $I\in[k]$ such that $\vol(I)=V$. We will also write $v_I=\{v_i|i\in I\}$.

We begin by estimating $N_V(\d)$. All fractions are to be understood as their floor values, which we do not write to make the notation less cumbersome.
\begin{lem}\label{lem_estim_Nv}
Let $0<V\leq n$, then
\[N_V(\d)\leq V/3 \binom{2n/3}{V/3}.\]
\end{lem}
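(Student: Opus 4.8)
The quantity $N_V(\d)$ counts subsets $I\subseteq[k]$ with $\vol(I)=\sum_{i\in I}d_i=V$. Since every $d_i\geq 3$, any such $I$ has $|I|\leq V/3$. The plan is to first control the number of elements of $I$, and then bound the number of ways to choose a subset of that size summing to exactly $V$. Writing $j=|I|$, we have $N_V(\d)=\sum_{j\leq V/3} N_{V,j}(\d)$, where $N_{V,j}(\d)$ counts $j$-element subsets of $[k]$ with $\vol=V$.

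**Bounding $N_{V,j}$.** Here I would use a simple injection or generating-function estimate. The crude bound $N_{V,j}(\d)\leq\binom{k}{j}$ is too weak because $k$ can be as large as $2n/3$ and $j$ up to $V/3$ — but in fact the constraint that the degrees sum to $V$ is what saves us. The cleanest route: since all $d_i\geq 3$, we have $k\leq |\d|/3 = 2n/3$, so $\binom{k}{j}\leq\binom{2n/3}{j}$. Summing $\binom{2n/3}{j}$ over $j\leq V/3$ is dominated (when $V/3\leq n/3$, i.e. $V\leq n$, which is our hypothesis) by a constant times the largest term, or one can bound the whole sum by $(V/3)\binom{2n/3}{V/3}$ using that $\binom{2n/3}{j}$ is increasing for $j\leq V/3\leq n/3$. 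This gives exactly
\[N_V(\d)\;=\;\sum_{j\leq V/3}N_{V,j}(\d)\;\leq\;\sum_{j\leq V/3}\binom{2n/3}{j}\;\leq\;\frac{V}{3}\binom{2n/3}{V/3}.\]

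**The monotonicity step.** The one point deserving care is the claim that $j\mapsto\binom{2n/3}{j}$ is non-decreasing on the range $j\in\{0,1,\dots,V/3\}$. The binomial coefficient $\binom{m}{j}$ increases in $j$ precisely while $j\leq m/2$. Here $m=2n/3$ so $m/2=n/3$, and the largest value of $j$ we consider is $V/3\leq n/3$ by the hypothesis $V\leq n$. Hence every term in the sum is at most the last one, $\binom{2n/3}{V/3}$, and there are at most $V/3$ terms (values $j=1,\dots,V/3$; the $j=0$ term only occurs when $V=0$, excluded). This yields the stated inequality. (One must also note that floors are being suppressed as announced in the statement, so $2n/3$, $V/3$ mean $\lfloor 2n/3\rfloor$, $\lfloor V/3\rfloor$, and the monotonicity range argument is unaffected.)

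**Where the difficulty lies.** There is essentially no obstacle here — this is a purely combinatorial counting estimate. The only thing to get right is matching the range of $j$ against the peak of the binomial coefficient, which is why the hypothesis $V\leq n$ (equivalently $V/3\leq n/3 = \tfrac12\cdot\tfrac{2n}{3}$) is exactly what is needed. If one wanted a sharper bound one could replace $\binom{k}{j}$ by a count that uses the degree sequence more precisely (e.g. an exponential-moment/entropy estimate weighting by $\prod_{i\in I}$ of something), but the coarse bound above is all that Proposition~\ref{prop_config_expander} will require, so I would stop here.
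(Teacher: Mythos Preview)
Your proof is correct and follows essentially the same approach as the paper: bound $|I|\leq V/3$ from $d_i\geq 3$, bound $k\leq 2n/3$ from $d_i\geq 3$, then use monotonicity of $j\mapsto\binom{2n/3}{j}$ on $[1,V/3]$ (granted by $V\leq n$) to dominate the sum by $(V/3)$ times its largest term. The paper's argument is identical in structure and detail, just more terse.
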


\begin{proof}
Since for all $i$ we have $d_i\geq 3$, we have $\vol(I)\geq 3|I|$, and thus if $I$ is such that $\vol(I)=V$, then $|I|\leq V/3$. Hence
\[N_V(\d)\leq \sum_{i=1}^{V/3} \binom{k}{i}.\]
But, since $d_i\geq 3$ for all $i$, we have $k\leq 2n/3$, thus
\[N_V(\d)\leq \sum_{i=1}^{V/3} \binom{2n/3}{i}.\]
Finally, since $V/3\leq \frac{1}{2}(2n/3)$, the sequence $\binom{2n/3}{i}$ is increasing in $i$ in the range $[1,V/3]$, therefore 
\[N_V(\d)\leq V/3 \binom{2n/3}{V/3}.\]
\end{proof}

Now, we will define a set of bad events. Let $\cE_V$ be the event that, in \cmd{}, there exists a set $I$ with $\vol(I)=V$ and such that among all the dangling \he{s} of $v_I$, strictly less than $\delta V$ get paired with dangling \he{s} of vertices outside $v_I$. Notice that \cmd{} is not a $\delta$-expander iff at least one of the $\cE_V$ happens. We will separate the analysis in two regimes, depending on the size of $V$. We introduce a universal, small enough $\eta>0$. We do not make it explicit, for the sake of simplicity, but we will show that it exists later on.
 
 \paragraph{Small subsets}
 
We will tackle the case $V\leq \eta n$. This proof follows the lines of~\cite{HLW06}[Theorem 4.16] in the case of regular graphs.
 
First we treat the case of $V=4$ or $V=6$. If we require that $\delta <\frac 1 6$, then if $\cE_4$ or $\cE_6$ happens, it implies that \cmd{} is disconnected. Hence
\begin{equation}\label{eq_A}
\P(\cE_4\cup\cE_6)\leq \P(\text{\cmd{} is disconnected}).
\end{equation}
From now on, $V\geq 8$.
 Given $I\subset [k]$ of volume $V$ and $H$ a subset of the \he{s} of $v_I$ we define the following event
 \[Y_{I,H}:=\text{all \he{s} of $H$ are matched along themselves.}\]
 If $H$ has cardinality $h$, then
\[\P\left(Y_{I,H}\right)=\frac{(h-1)!!(2n-h-1)!!}{(2n-1)!!}.\]
By a union bound and Lemma~\ref{lem_estim_Nv}, we have
\begin{align*}\label{eq_union_bound_small_set}
\P\left(\cE_V\right)&\leq N_V(\d) \sum_{(1-\delta)V<h\leq V}\binom{V}{h} \frac{(h-1)!!(2n-h-1)!!}{(2n-1)!!}\\
&\leq\sum_{(1-\delta)V<h\leq V}  V/3 \binom{2n/3}{V/3} \binom{V}{h} \frac{(h-1)!!(2n-h-1)!!}{(2n-1)!!}.
\end{align*}

By the classical inequality $\binom a b\leq \left(\frac {ae} b\right)^b$ and the fact that $h\leq V$, we obtain

\begin{align*}
\P\left(\cE_V\right)&
\leq \sum_{(1-\delta)V<h\leq V} V/3 \left(\frac{2ne/3}{V/3}\right)^{V/3} \left(\frac{V}{h}\right)^h \frac{h-1}{2n-1}\frac{h-3}{2n-3}\ldots \frac{1}{2n-h+1}\\
& \leq \sum_{(1-\delta)V<h\leq V} V/3 \left(\frac{2ne/3}{V/3}\right)^{V/3} \left(\frac{V}{h}\right)^h \left(\frac{h}{n}\right)^{h/2}\\
&\leq \sum_{(1-\delta)V<h\leq V} V/3 (2e)^{V/3}\left(\frac{n}{V}\right)^{V/3-h/2}.
\end{align*}
Now, for $\delta$ small enough\footnote{uniformly in $\d$.}, we have $V/3-h/2<-\frac{V}{7}$ when $(1-\delta)V<h\leq V$, hence, since there are less than $V$ terms in the sum above,  
\[\P\left(\cE_V\right)\leq V^2/3 (2e)^{V/3}\left(\frac{V}{n}\right)^{V/7}. \]
For $\eta>0$ small enough (and $n$ large enough), the RHS in the inequality above is decreasing in $V$ in the range $V\in[8,\eta n]$, and hence
\begin{equation}\label{eq_B}
\sum_{V=8}^{\eta n} \P\left(\cE_V\right)=o(1/n)
\end{equation}
uniformly in $\d$.

\paragraph{Big subsets}
We will now care about bigger bad subsets, this time we need to control probabilities more carefully. The following proof is adapted from \cite{KW14}[Section 7].

Given $0\leq y<u\leq 1$, let $X_{u,y}$ be the number of subsets $I\subset[k]$ of volume $un$ that have exactly $yn$ \he{s} that are paired with \he{s} outside $I$.

We have (using Lemma~\ref{lem_estim_Nv})
\begin{align*}
\E(X_{u,y})&=N_{un}(\d) \binom{un}{yn}\binom{2n-un}{yn} \frac{(yn)!(2n-un-yn-1)!!(un-yn-1)!!}{(2n-1)!!}\\
&\leq un/3 \binom{2n/3}{un/3}\binom{un}{yn}\binom{2n-un}{yn} \frac{(yn)!(2n-un-yn-1)!!(un-yn-1)!!}{(2n-1)!!}.
\end{align*}
Therefore, by Stirling's formula, we have
\begin{equation}\label{eq_log}
\log\E(X_{u,y})\leq n(f(u,y)+o(1)),
\end{equation}
where
\[f(u,y)=\log\left(\left(\frac{2^2}{u^u(2-u)^{2-u}}\right)^{1/3} \frac{u^u}{y^y (u-y)^{u-y}}\frac{(2-u)^{2-u}}{y^y(2-u-y)^{2-u-y}} \frac{y^y}{2}\left((2-u-y)^{2-u-y}(u-y)^{u-y}\right)^{1/2}\right).\]

Notice that 

\[f(u,0)=\frac{1}{6}\log(u^u(2-u)^{2-u})-\frac{1}{3}\log 2.\]
It is easily verified that this function is decreasing and tends to zero as $u\to 0$. Therefore, taking our $\eta>0$ from earlier, let $-c=f(\eta,0)/2$. We have, for all $u\geq \eta$, $f(u,0)\leq-2c$.
By continuity of $f(u,y)$, there exists $\delta>0$ small enough such that for all $u\geq \eta$ and $y<\eta\delta$ 
\[f(u,y)<-c.\]

Hence, by \eqref{eq_log}, the first moment method and a union bound, we have that (again, uniformly in $\d$):

\begin{equation}\label{eq_bound_big_sets}
\sum_{V\geq \eta n} \P(\cE_V)=o\left( \frac{1}{n}\right).
\end{equation}
\paragraph{Concluding the proof} Combining \eqref{eq_A}, \eqref{eq_B} and~\eqref{eq_bound_big_sets} yields the proof of Proposition~\ref{prop_config_expander}.


 \section{Proof of Lemma~\ref{lem_few_big_branches}}\label{sec_branches}
This section is devoted to the proof of Lemma~\ref{lem_few_big_branches}. The general idea of the proof is to approach the sizes of the branches in \u{} by random i.i.d. variables. This method is often called \emph{Poissonization} by abuse of language, and it relies on the saddle point method. We will directly apply the results of \cite{FlaSeg09}[Chapter VIII.8].

\paragraph{Counting doubly rooted trees}
We first need to compute the generating function of  doubly rooted trees counted by edges. Let 
\[D(z)=\sum_{s\geq 1} dt_s z^s,\]
and let
\[T(z)=\frac{1-\sqrt{1-4z}}{2z}-1\]
be the generating function of planar rooted trees with at least one edge, counted by edges.
Then, we can prove the following formula
\[D=T+TD\]
by considering the path between the two roots of a \dbr{} (see Figure~\ref{fig_decomp_dbr}).
\begin{figure}
\center
\includegraphics[scale=0.7]{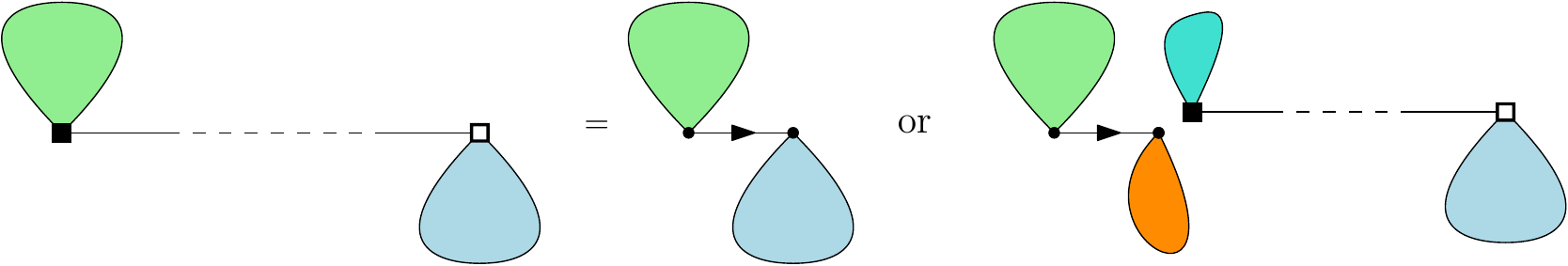}
\caption{Decomposing a \dbr{} along the path between its roots. The first (resp. second root) is a box (resp. square).}\label{fig_decomp_dbr}
\end{figure}
This directly implies
\begin{equation}\label{eq_gf_dbr}
D(z)=\frac{-2 z+1-\sqrt{1-4 z}}{4 z-1+\sqrt{1-4 z}}.
\end{equation}
Also, $C=\frac{z\partial}{\partial z}D$ is the series of \dbr{s} with a marked edge, and
\begin{equation}\label{eq_gf_dbr_marked}
C(z)=\frac{z(2-2 \sqrt{1-4 z}-4 z)}{(4 z-1+\sqrt{1-4 z})^{2} \sqrt{1-4 z}}.
\end{equation}

\paragraph{Studying the Poissonized law: defining the law}
For any $0<\beta<1/4$, we define the  two random variables $X_\beta$ and $Y_\beta$ with laws
\[\P(X_\beta=k)=\frac{([z^k]C(z))\beta^k}{C(\beta)}\]
and
\[\P(Y_\beta=k)=\frac{([z^k]D(z))\beta^k}{D(\beta)}.\]

Now, fix a constant $\theta\leq c\leq 1$ and $s_n\leq n-1$ such that $s_n\sim cn$.
We fix $\beta$ such that
\begin{equation}\label{eq_find_beta}
\E(X_\beta=k)+s_n\E(Y_\beta=k)=n.
\end{equation}
Let us first show that this $\beta$ exists. The equation above rewrites
\[(1+o(1))c\frac{C(\beta)}{D(\beta)}=1\]
Now, for $0<c\leq 1$, the equation $c\frac{C(\beta)}{D(\beta)}=1$ has a root in $[0,1/4)$, that is \[-\frac{c\left(\frac{c}{4}+\frac{\sqrt{c^{2}+8 c}}{4}\right)}{8}-\frac{c}{8}+\frac{1}{4}.\] Hence, by continuity, for $n$ large enough, \eqref{eq_find_beta} has a solution $\beta\in[0,1/4)$. Notice that $\beta$ is decreasing in $s_n$, hence there exists $\beta^*<1/4$ such that for all $s_n\geq \theta n$, we have $\beta\leq \beta^*$.

\paragraph{Studying the Poissonized law: depoissonization probability}
Now, the probability generating functions (in a variable $u$) for $X_\beta$ and $Y_\beta$ are respectively $\frac{C(\beta u)}{C(\beta)}$ and $\frac{D(\beta u)}{D(\beta)}$.
Let $Y_1$, $Y_2$,\ldots,$Y_{s_n}$ be i.i.d random variables distributed like $Y_\beta$, and let 
\[S=X_\beta+Y_1+Y_2+\ldots+Y_{s_n}.\]
We have $\E(S)=n$, therefore, by \cite{FlaSeg09}[Corollary VIII.3], we have
\begin{equation}\label{eq_proba_depoisson}
\P(S=n)=\Theta\left(\frac{1}{\sqrt{n}}\right)
\end{equation}
uniformly in $s_n\in [\theta n,n-1]$. 

\paragraph{Studying the Poissonized law: large deviations}
Next we will estimate the large deviations of the $Y_i$'s. Fix an $A>1$ such that $A\beta^*<1/4$ (recall that $1/4$ is the radius of convergence of $D$).
We have
\[\E(A^{Y_\beta})=\frac{D(A\beta)}{D(\beta)}\leq \frac{D(A\beta)}{A\beta}\frac \beta{D(\beta)}.\]
But $D(z)/z$ is a series with positive coefficients, and hence increasing. When $z\to 0$, $D(z)/z\to 1$ (the number of \dbr{s} with one edge). Therefore
\[\E(A^{Y_\beta})\leq \frac{D(A\beta^*)}{A\beta^*}=:W.\]
Hence, by the Markov inequality, 
\begin{align}\label{eq_proba_deviation}
P(Y_\beta\geq k)\leq \frac{W}{A^k}.
\end{align}

Now, let $Y^{(M)}_i=\mathbbm{1}_{Y_i>M}Y_i$ and
\[L^{(M)}=\mathbbm{1}_{X_\beta>M}X_\beta+\sum_{i=1}^{s_n}Y^{(M)}_i.\]
Let us also fix $1<B<A$, and set $r=B/A$, we have
\[\E\left(B^{Y^{(M)}_i}\right)=\sum_{k\geq M}P(Y_\beta\geq k)B^k\leq W\sum_{k\geq M}r^k=\frac{Wr^M}{1-r},\]
where the inequality follows from~\eqref{eq_proba_deviation}.
From now on, and until the end of the proof, let us fix $M$ large enough such that $\frac{Wr^M}{1-r}$ is small enough to guarantee $\E(B^{Y^{(M)}_i})^c\leq B^\eps/2$ for all $c\in[\theta, 1]$ in the inequality above. Note that this $M$ is independent of $s_n$ as long as $s_n\in[\theta n,n-1]$.

We can also show $\E(B^{\mathbbm{1}_{X_\beta>M}X_\beta})=O(1)$, hence by the Markov inequality we obtain
\begin{equation}\label{eq_deviation}
\P(L^{(M)}>\eps n)\leq O(1)\frac{\E\left(B^{Y^{(M)}_i}\right)^{s_n}}{B^{\eps n}}\leq 2^{-(1+o(1))n}
\end{equation}
uniformly in $s_n$.

Combining \eqref{eq_deviation} with \eqref{eq_proba_depoisson}, we obtain the following for $s_n\in[\theta n,n-1]$ and $n$ large enough 
\begin{equation}\label{eq_proba_large_deviation}
\P(L^{(M)}>\eps n|S=n) \leq \sqrt{2}^{-n}.
\end{equation}

\paragraph{Sizes of the branches} To go from a map to its core, one removes each branch and replaces it by an edge. Since a rooted map has no automorphisms, these branches can be put in a list of \dbr{} whose total size is the number of edges in the map. One actually needs to be a little more careful for the branch containing the root: mark the (unoriented) edge of the root in the \dbr{}, and replace it by a root edge with a coherent orientation (see Figure~\ref{fig_core_root}). This operation is bijective, therefore the list of sizes of the branches of \u{}, conditionally on \cu{} having $s_n+1$ edges is exactly the list of variables $(X_\beta,Y_1,Y_2,\ldots,Y_{s_n})$, conditionally on $S=n$.

\begin{figure}
\center
\includegraphics[scale=0.7]{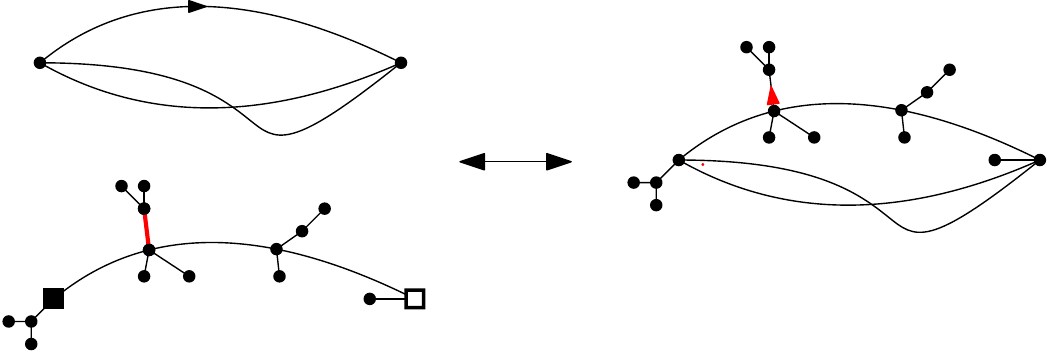}
\caption{The core decomposition for the branch that contains the root. An edge is marked, and the root is obtained by orienting this marked edge towards the second root in the clockwise exploration order around the branch.}\label{fig_core_root}
\end{figure}

We are ready to prove Lemma~\ref{lem_few_big_branches}.

\begin{proof}[Proof of Lemma~\ref{lem_few_big_branches}]
Since \cu{} is of genus $g_n$ and has one face, by the Euler formula, it has $s_n+1$ edges, with $s_n\geq 2g_n-1>\theta n$ (for $n$ large enough). The number of edges of \cM{} is exactly $n-L^{(M)}$ (conditionally on $S=n$).
We can apply~\eqref{eq_proba_large_deviation} conditionally on the number of edges of \cu{}, and it uniformly gives 
\[\P(\text{\cM{} has less than $(1-\eps)n$ edges})\leq \sqrt{2}^{-n}\]
which finishes the proof.
\end{proof}

\section{Causal graph of the parameters}\label{sec_causal}

In this paper, we introduce several parameters, and it might not be clear that there is no ``circularity" (especially since some of them are defined implicitly). The following figure presents a causal graph of the parameters and constants that appear in this problem.

\begin{figure}[!h]
\center
\includegraphics[scale=1]{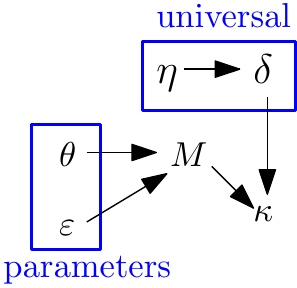}
\caption{Causal graph of the constants of this paper.}
\end{figure}
\footnotesize



\normalsize




\bibliographystyle{abbrv}

\bibliography{bibli}

 \end{document}